\newtheorem{theorem}{Theorem}
\newtheorem{lemma}[theorem]{Lemma}
\newtheorem{corollary}[theorem]{Corollary}
\newtheorem{proposition}[theorem]{Proposition}
\newtheorem{remark}[theorem]{Remark}
\newtheorem{example}[theorem]{Example}
\newtheorem{question}[theorem]{Question}
\newcommand{\tto}{\twoheadrightarrow}
\begin{document}

\title{On Kiselman quotients of $0$-Hecke monoids}
\author{Olexandr Ganyushkin and Volodymyr Mazorchuk}
\date{\today}
\begin{abstract}
Combining the definition of $0$-Hecke monoids with that of  
Kiselman semigroups, we define what we call Kiselman quotients of 
$0$-Hecke monoids associated with simply laced Dynkin diagrams.
We classify these monoids up to isomorphism, determine their
idempotents and show that they are $\mathcal{J}$-trivial. 
For type $A$ we show that Catalan numbers appear as the 
maximal cardinality of our monoids, in which case the
corresponding monoid is isomorphic to the monoid of all
order-preserving and order-decreasing total transformations
on a finite chain. We construct various representations of these 
monoids by matrices, total transformations and binary relations.
Motivated by these results, with a mixed graph we associate a monoid, 
which  we call a Hecke-Kiselman monoid, and classify such monoids
up to isomorphism. Both Kiselman semigroups and Kiselman quotients 
of $0$-Hecke monoids are natural examples of Hecke-Kiselman monoids.
\end{abstract}

\maketitle

\section{Definitions and description of the results}\label{s1}

Let $\Gamma$ be a simply laced Dynkin diagram (or a disjoint union
of simply laced Dynkin diagrams). Then the
{\em $0$-Hecke monoid} $\mathcal{H}_{\Gamma}$ associated with $\Gamma$
is the monoid generated by idempotents $\varepsilon_i$, where $i$ runs through
the set $\Gamma_0$ of all vertexes of $\Gamma$, subject to the usual braid
relations, namely, $\varepsilon_i\varepsilon_j=\varepsilon_j\varepsilon_i$ 
in the case when $i$ and $j$ are not 
connected in $\Gamma$, and 
$\varepsilon_i\varepsilon_j\varepsilon_i=
\varepsilon_j\varepsilon_i\varepsilon_j$ in the case when $i$ and 
$j$ are connected in $\Gamma$ (see e.g. \cite{NT}). 
Elements of $\mathcal{H}_{\Gamma}$
are in a natural bijection with elements of the Weyl group
$W_{\Gamma}$ of $\Gamma$. The latter follows e.g. from
\cite[Theorem~1.13]{Ma} as the semigroup algebra of
the monoid $\mathcal{H}_{\Gamma}$ is canonically isomorphic to 
the specialization of the Hecke algebra $\mathcal{H}_q(W_{\Gamma})$
at $q=0$, which also explains the name. This specialization was studied by
several authors, see \cite{No,Ca,McN,Fa,HNT,NT2} and references
therein. The monoid $\mathcal{H}_{\Gamma}$ appears for example in
\cite{FG,HST,HST2}. One has to note that $\mathcal{H}_{\Gamma}$
appears in articles where the emphasis is made on its semigroup algebra 
and not its structure as a monoid. Therefore semigroup properties of
$\mathcal{H}_{\Gamma}$ are not really spelled out in the above papers.
However, with some efforts one can derive from the above literature 
that the monoid $\mathcal{H}_{\Gamma}$ is $\mathcal{J}$-trivial
(we will show this in Subsection~\ref{s2.1}) and has $2^n$ idempotents, 
where $n$ is the number of vertexes in $\Gamma$ (we will show this 
in Subsection~\ref{s2.2}).

Another example of an idempotent generated $\mathcal{J}$-trivial monoid
with $2^n$ idempotents (where $n$ is the number of generators) is 
{\em Kiselman's semigroup} $\mathbf{K}_n$, defined as follows: it is generated
by idempotents $e_i$, $i=1,2,\dots,n$, subject to the relations
$e_ie_je_i=e_je_ie_j=e_ie_j$ for all $i>j$ (see \cite{Go}). 
This semigroup was studied in \cite{KM,Al}. In particular, in \cite{KM}
it was shown that $\mathbf{K}_n$ has a faithful representation by $n\times n$
matrices with non-negative integer coefficients.

The primary aim of this paper is to study natural mixtures of these two
semigroups, which we call {\em Kiselman quotients} of $\mathcal{H}_{\Gamma}$.
These are defined as follows: choose any 
orientation $\vec{\Gamma}$ of $\Gamma$ and define the
semigroup $\mathbf{K}\mathcal{H}_{\vec{\Gamma}}$ as the quotient 
of $\mathcal{H}_{\Gamma}$ obtained by imposing the additional relations
$\varepsilon_i\varepsilon_j\varepsilon_i=
\varepsilon_j\varepsilon_i\varepsilon_j=\varepsilon_i\varepsilon_j$ 
in all cases when 
$\vec{\Gamma}$ contains the arrow $\xymatrix{i\ar[r]&j}$.
These relations are natural combinations of the relations defining
$\mathcal{H}_{\Gamma}$ and $\mathbf{K}_{n}$. Our first result is the
following theorem:

\begin{theorem}\label{thm1}
\begin{enumerate}[$(i)$]
\item\label{thm1.1} The semigroup 
$\mathbf{K}\mathcal{H}_{\vec{\Gamma}}$ is $\mathcal{J}$-trivial.
\item\label{thm1.2} 
The set $\mathbf{E}:=\{\varepsilon_i:i\in \Gamma_0\}$ is the unique 
irreducible generating system for 
$\mathbf{K}\mathcal{H}_{\vec{\Gamma}}$.
\item\label{thm1.3} The semigroup 
$\mathbf{K}\mathcal{H}_{\vec{\Gamma}}$ contains $2^n$ idempotents, 
where $n$ is the number of vertexes in $\Gamma$.
\item\label{thm1.4} The semigroups 
$\mathbf{K}\mathcal{H}_{\vec{\Gamma}}$ and
$\mathbf{K}\mathcal{H}_{\vec{\Lambda}}$ are isomorphic if and only
if the directed graphs $\Gamma$ and $\Lambda$ are isomorphic.
\item\label{thm1.5} The semigroups 
$\mathbf{K}\mathcal{H}_{\vec{\Gamma}}$ and
$\mathbf{K}\mathcal{H}_{\vec{\Lambda}}$ are anti-isomorphic if 
and only if the directed graphs $\Gamma$ and $\Lambda$ are anti-isomorphic.
\item\label{thm1.6} If $\Gamma$ is a Dynkin diagram of type $A_n$, then
$|\mathbf{K}\mathcal{H}_{\vec{\Gamma}}|
\leq C_{n+1}$, where $C_n:=\frac{1}{n+1}\binom{2n}{n}$ is the 
$n$-th Catalan number.
\item\label{thm1.7} If $\Gamma$ is a Dynkin diagram of type $A_n$, then
$|\mathbf{K}\mathcal{H}_{\vec{\Gamma}}|=C_{n+1}$ if and only if
$\vec{\Gamma}$ is isomorphic to the graph
\begin{displaymath}
\xymatrix{
\bullet\ar[r]&\bullet\ar[r]&\bullet\ar[r]&\dots\ar[r]&
\bullet\ar[r]&\bullet
}.
\end{displaymath}
\item\label{thm1.8} If $\vec{\Gamma}$ is as in 
$(\ref{thm1.7})$, then the semigroup 
$\mathbf{K}\mathcal{H}_{\vec{\Gamma}}$ is isomorphic to the
semigroup $\mathcal{C}_{n+1}$ of all order-preserving and order-decreasing
total transformations of $\{1,2,\dots,n,n+1\}$ $($see 
\cite[Chapter~14]{GM}$)$.
\end{enumerate}
\end{theorem}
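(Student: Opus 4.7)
The plan is to construct an explicit surjective monoid homomorphism $\phi: \mathbf{K}\mathcal{H}_{\vec{\Gamma}} \to \mathcal{C}_{n+1}$ and then invoke the cardinality equality from part~\eqref{thm1.7}. For each $i \in \{1, \ldots, n\}$, let $f_i \in \mathcal{C}_{n+1}$ denote the idempotent transformation that sends $i{+}1$ to $i$ and fixes every other element; it visibly belongs to $\mathcal{C}_{n+1}$. First I would verify that $\varepsilon_i \mapsto f_i$ extends to a well-defined monoid homomorphism by checking the defining relations. Commutation $f_i f_j = f_j f_i$ for $|i-j|>1$ is immediate, since $f_i$ and $f_j$ alter disjoint points. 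For an oriented edge $i\to i{+}1$, a direct computation shows that each of the three elements $f_if_{i+1}f_i$, $f_{i+1}f_if_{i+1}$, and $f_if_{i+1}$ equals the transformation that sends both $i{+}1$ and $i{+}2$ to $i$ and fixes everything else, which matches the Kiselman relation.

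The main step is to show that $\phi$ is surjective, equivalently that $f_1, \ldots, f_n$ generate $\mathcal{C}_{n+1}$. I would induct on the defect $d(f):=\sum_{k=1}^{n+1}(k - f(k))\geq 0$. If $d(f)=0$ then $f$ is the identity. Otherwise $\mathrm{Im}(f) \subsetneq \{1, \ldots, n+1\}$, and I let $i$ be the smallest element of $\mathrm{Im}(f)$ whose successor $i{+}1$ does not lie in $\mathrm{Im}(f)$. Noting that the assumption $\max f^{-1}(i) \leq i$ would force $f(i)=i$ and $f(i{+}1)=i{+}1$ by the order properties, contradicting $i{+}1\notin\mathrm{Im}(f)$, we obtain $\max f^{-1}(i) \geq i{+}1$. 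I would then define $g\in\mathcal{C}_{n+1}$ to agree with $f$ outside $f^{-1}(i)$ and to send every element of $f^{-1}(i) \cap \{i{+}1,\ldots,n{+}1\}$ to $i{+}1$, keeping $i\mapsto i$ when $i\in f^{-1}(i)$. Routine verification, which uses $i{+}1\notin\mathrm{Im}(f)$ at the upper boundary of $f^{-1}(i)$, gives $g \in \mathcal{C}_{n+1}$, $f = f_i \circ g$, and $d(g) < d(f)$, completing the induction.

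Finally, part~\eqref{thm1.7} gives $|\mathbf{K}\mathcal{H}_{\vec{\Gamma}}| = C_{n+1}$, while the classical fact $|\mathcal{C}_{n+1}| = C_{n+1}$ (see \cite[Chapter~14]{GM}) makes the surjection $\phi$ a map between finite sets of equal cardinality, hence a bijection and the desired isomorphism. The main technical obstacle is the inductive factorization: one must verify that the candidate $g$ remains order-preserving across the boundaries of $f^{-1}(i)$ (which uses $i{+}1\notin\mathrm{Im}(f)$ to ensure that $f$ jumps from $i$ directly to at least $i{+}2$) and that $d$ strictly decreases (which uses $\max f^{-1}(i)\geq i{+}1$, so at least one element is promoted from the value $i$ to the value $i{+}1$).
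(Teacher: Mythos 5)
Your text addresses only item \eqref{thm1.8} of the theorem. Items \eqref{thm1.1}--\eqref{thm1.7} --- the $\mathcal{J}$-triviality, the uniqueness of the irreducible generating system, the count of $2^n$ idempotents, the isomorphism and anti-isomorphism classifications, the Catalan upper bound, and the characterization of when that bound is attained --- are not touched at all. Relative to the full statement this is the essential gap: it is legitimate to use \eqref{thm1.7} as an input when proving \eqref{thm1.8} (the paper does the same), but the equality $|\mathbf{K}\mathcal{H}_{\vec{\Gamma}}|=C_{n+1}$ that you invoke rests on the upper bound via short-braid-avoiding (equivalently $321$-avoiding) permutations and on the analysis of orientations containing a sink of indegree two, and nothing in your argument supplies any of that.

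For item \eqref{thm1.8} itself your argument is correct and is essentially the paper's: the paper uses the same idempotent transformations $T_i$ (your $f_i$, up to relabelling the vertices to match the chosen orientation), verifies the same relations, and concludes by comparing cardinalities, the epimorphism onto $\mathcal{C}_{n+1}$ having already been produced in the proof of \eqref{thm1.7}. The one genuine point of difference is that the paper cites \cite[Chapter~14]{GM} for the fact that the $T_i$ generate $\mathcal{C}_{n+1}$, whereas you prove it by an explicit induction on the defect $\sum_{k}\bigl(k-f(k)\bigr)$, peeling off one generator via the factorization $f=f_i\circ g$. That induction is sound: $f^{-1}(i)$ is an interval by order-preservation, the hypothesis $i+1\notin\mathrm{Im}(f)$ ensures $f$ jumps to at least $i+2$ above that interval so that $g$ remains order-preserving and order-decreasing, and $\max f^{-1}(i)\geq i+1$ forces the defect to drop strictly. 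This makes that portion of the proof self-contained, which is a modest improvement, but it does not repair the seven missing items.
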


The semigroup $\mathcal{C}_{n+1}$ appears in various disguises in 
\cite{So,Pi,HT,GM}. Its presentation can be derived from \cite{So}, 
however, in the present paper this semigroup appears in a different 
context and our proof is much less technical.
In \cite{So} it is also observed that the cardinality
of the semigroup with this presentation is given by Catalan numbers.
Classically, Catalan numbers appear in semigroup theory as the cardinality 
of the so-called Temperley-Lieb semigroup $\mathfrak{TL}_n$,
see \cite[6.25(g)]{St1}. That Catalan numbers appear as the cardinality 
of $\mathcal{C}_{n+1}$ was first observed in \cite{Hi} (with an 
unnecessarily difficult proof, see \cite[6.19(u)]{St1} for a 
straightforward argument). In \cite{GM0} it was shown that Catalan numbers 
also appear as the maximal cardinality of a nilpotent subsemigroup in 
the semigroup $\mathcal{IO}_n$ of all partial order-preserving injections 
on $\{1,2,\dots,n\}$ (see also \cite{GM01} for an alternative argument).

Motivated by both Kiselman semigroups and Kiselman quotients of
$0$-Hecke monoids, we propose the notion of Hecke-Kiselman semigroups
associated with an arbitrary mixed (finite) graph. A mixed graph
is a simple graph in which edges can be both oriented and unoriented.
Such graph is naturally given by an anti-reflexive binary relation 
$\Theta$ on a finite set (see Subsection~\ref{s5.1}). The corresponding
Hecke-Kiselman semigroup $\mathbf{HK}_{\Theta}$ is generated by 
idempotents $e_i$ indexed by vertexes of the graph, subject to the
following relations:
\begin{itemize}
\item if $i$ and $j$ are not connected by any edge, then $e_ie_j=e_je_i$;
\item if $i$ and $j$ are connected by an unoriented edge, then
$e_ie_je_i=e_je_ie_j$;
\item if $i$ and $j$ are connected by an oriented edge
$i\to j$, then $e_ie_je_i=e_je_ie_j=e_ie_j$.
\end{itemize}
Our second result is:

\begin{theorem}\label{thmn1}
Let $\Theta$ and $\Phi$ be two anti-reflexive binary relations
on finite sets. Then $\mathbf{HK}_{\Theta}\cong\mathbf{HK}_{\Phi}$
if and only if the corresponding mixed graphs are isomorphic.
\end{theorem}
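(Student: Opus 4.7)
The ``if'' direction is immediate: any isomorphism of mixed graphs translates the presentation of $\mathbf{HK}_{\Theta}$ into that of $\mathbf{HK}_{\Phi}$ and hence lifts to a monoid isomorphism. All the content is in the converse, which I plan to prove by first showing that the generators of $\mathbf{HK}_{\Theta}$ can be characterized purely in terms of the abstract monoid structure, and then reading off the edges and orientations between them from two-letter identities. Write $\Gamma_{0}$ and $\Phi_{0}$ for the underlying vertex sets of $\Theta$ and $\Phi$.

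For the first point, observe that every defining relation of $\mathbf{HK}_{\Theta}$ (commutation, braid, Kiselman) involves the same multiset of letters on both sides and becomes a triviality after abelianizing. Hence the abelianization of $\mathbf{HK}_{\Theta}$ is the free idempotent commutative monoid on $\Gamma_{0}$, i.e.\ the Boolean lattice $2^{\Gamma_{0}}$, and the canonical projection assigns to each element $w$ a well-defined \emph{support} $\sigma(w)\subseteq\Gamma_{0}$, namely the set of generators appearing in any word representing $w$. An element with $|\sigma(w)|=1$ must be a word in a single letter $e_{i}$ and so, by idempotency, equals $e_{i}$ itself. Thus $\{e_{i}:i\in\Gamma_{0}\}$ is precisely the set of elements of singleton support, a description that survives any isomorphism. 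Consequently a monoid isomorphism $\varphi\colon\mathbf{HK}_{\Theta}\to\mathbf{HK}_{\Phi}$ must permute generators and therefore determine a bijection $\pi\colon\Gamma_{0}\to\Phi_{0}$ with $\varphi(e_{i})=e_{\pi(i)}$.

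For the second point, I would construct, for each pair $i,j\in\Gamma_{0}$, a retraction $\rho\colon\mathbf{HK}_{\Theta}\to\mathbf{HK}_{\Theta|_{\{i,j\}}}$ defined on generators by $\rho(e_{k})=1$ for $k\notin\{i,j\}$ and $\rho(e_{i})=e_{i}$, $\rho(e_{j})=e_{j}$. A straightforward case inspection shows that each defining relation of $\mathbf{HK}_{\Theta}$ becomes either a defining relation of $\mathbf{HK}_{\Theta|_{\{i,j\}}}$ or a trivial identity of the form $1=1$ or $e_{k}=e_{k}$, so $\rho$ is well defined; being a left inverse to the evident inclusion, it identifies the submonoid of $\mathbf{HK}_{\Theta}$ generated by $e_{i},e_{j}$ with $\mathbf{HK}_{\Theta|_{\{i,j\}}}$. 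A direct computation in the four rank-two Hecke--Kiselman monoids then yields the following algebraic characterization of the edge between $i$ and $j$: no edge iff $e_{i}e_{j}=e_{j}e_{i}$; unoriented edge iff neither $e_{i}e_{j}$ nor $e_{j}e_{i}$ is idempotent; oriented arrow $i\to j$ iff $(e_{i}e_{j})^{2}=e_{i}e_{j}$ while $(e_{j}e_{i})^{2}\ne e_{j}e_{i}$; and the opposite arrow by the symmetric condition. Since each condition involves only $e_{i},e_{j}$ and the product, it is preserved by $\varphi$, forcing $\pi$ to respect every edge and its orientation.

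The main obstacle I anticipate is the unoriented case of the last step: one must know that $\mathbf{HK}_{\Theta|_{\{i,j\}}}$ really has six distinct elements in this case, so that the computation $(e_{i}e_{j})^{2}=e_{j}e_{i}e_{j}$ produces an element genuinely different from $e_{i}e_{j}$. This requires a nontriviality input, most conveniently the faithful action of the $0$-Hecke monoid $\mathcal{H}_{A_{2}}$ on the Weyl group $S_{3}$. Once that is in hand, the retraction constructed above transports the inequality back into $\mathbf{HK}_{\Theta}$, and the remaining inequalities needed for the oriented and no-edge cases follow from the analogous small structure of $\mathbf{K}_{2}$.
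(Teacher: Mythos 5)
Your proposal is correct and follows essentially the same route as the paper's proof (via Theorem~\ref{thm4}): the generators are recovered intrinsically as the content-one, i.e.\ irreducible, elements (Lemma~\ref{lem5}), and the edge type between two vertices is then read off by comparing the two-generator subsemigroups, which are identified with the corresponding rank-two Hecke--Kiselman monoids exactly by the retraction argument of Proposition~\ref{prop8}. The only difference is that you spell out the final comparison through explicit idempotency tests on $e_ie_j$ and $e_je_i$ (together with the needed sizes of the four rank-two monoids), a case check the paper leaves implicit.
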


The paper is organized as follows: Theorem~\ref{thm1} is proved in
Section~\ref{s2}. In Section~\ref{s3} we construct 
representations of $\mathbf{K}\mathcal{H}_{\vec{\Gamma}}$ by total 
transformations, matrices with non-negative integral coefficients and 
binary relations. We also describe simple and  indecomposable projective 
linear representations of $\mathbf{K}\mathcal{H}_{\vec{\Gamma}}$ over any field. In Section~\ref{s4} we give an application of our results
to combinatorial interpretations of Catalan numbers.
Finally, in Section~\ref{s5} we present a general definition of
Hecke-Kiselman semigroups and prove Theorem~\ref{thmn1}. As a corollary,
we obtain a formula for the number of isomorphism classes of
Hecke-Kiselman semigroups on a given set. We complete the paper
with a short list of open problems on Hecke-Kiselman semigroups.
\vspace{5mm}

\noindent
{\bf Acknowledgments.} The paper was written during the visit of the
first author to Uppsala University. The financial support and 
hospitality of Uppsala University are gratefully acknowledged.
For the second author the research was partially supported by the
Swedish Research Council. We thank Ganna Kudryavtseva for her comments.

\section{Proof of Theorem~\ref{thm1}}\label{s2}

As usual, we denote by $\Gamma_0$ the set of vertexes of the graph 
$\Gamma$ and set $n=|\Gamma_0|$. Consider the free monoid 
$\mathfrak{W}_n$ generated by $a_1,\dots,a_n$ and the canonical
epimorphism $\varphi:\mathfrak{W}_n\to 
\mathbf{K}\mathcal{H}_{\vec{\Gamma}}$, defined by 
$\varphi(a_i)=\varepsilon_i$, $i\in \Gamma_0$. We identify 
$\mathbf{K}\mathcal{H}_{\vec{\Gamma}}$ with the quotient of
$\mathfrak{W}_n$ by $\mathrm{Ker}(\varphi)$.

For $w\in \mathfrak{W}_n$ the {\em content} $\mathfrak{c}(w)$ is defined
as the set of indexes for which the corresponding generators appear 
in $w$. For any relation $v=w$ used in the definition of 
$\mathbf{K}\mathcal{H}_{\vec{\Gamma}}$ we have
$\mathfrak{c}(v)=\mathfrak{c}(w)$. This implies that 
for any $\alpha\in \mathbf{K}\mathcal{H}_{\vec{\Gamma}}$
(which we interpret as an equivalence class in 
$\mathrm{Ker}(\varphi)$) and any $v,w\in \alpha$ we have
$\mathfrak{c}(v)=\mathfrak{c}(w)$. Hence we may define
$\mathfrak{c}(\alpha)$ as $\mathfrak{c}(v)$ for any $v\in\alpha$.

\subsection{Proof of statement \eqref{thm1.1}}\label{s2.1}
We start with the following statement, which we could not find any
explicit reference to.

\begin{lemma}\label{lem2}
The monoid $\mathcal{H}_{\Gamma}$ is $\mathcal{J}$-trivial.
\end{lemma}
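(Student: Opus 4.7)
The plan is to identify $\mathcal{H}_{\Gamma}$ with the Weyl group $W_{\Gamma}$ equipped with a Demazure-like product, and then exploit the Bruhat order on $W_{\Gamma}$ to conclude $\mathcal{J}$-triviality. First I would make the bijection concrete: for each $w\in W_{\Gamma}$ with a reduced expression $w=s_{i_1}\cdots s_{i_k}$, set $\varepsilon_w:=\varepsilon_{i_1}\cdots\varepsilon_{i_k}$. By Matsumoto's theorem any two reduced expressions are connected by braid moves, and these moves are precisely among the defining relations of $\mathcal{H}_{\Gamma}$; so $\varepsilon_w$ is well defined, and together with the cardinality statement already quoted in the introduction this yields the bijection $W_{\Gamma}\leftrightarrow\mathcal{H}_{\Gamma}$.

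Next I would prove the key multiplication rule
\[
\varepsilon_i\,\varepsilon_w=
\begin{cases}
\varepsilon_{s_iw}, & \ell(s_iw)>\ell(w),\\
\varepsilon_w, & \ell(s_iw)<\ell(w),
\end{cases}
\]
together with the symmetric right-handed analogue. The ascending case is just concatenation of reduced expressions. For the descending case I would invoke the Exchange Condition to produce a reduced expression for $w$ starting with $s_i$, say $w=s_iw'$ with $\ell(w')=\ell(w)-1$, and then compute $\varepsilon_i\varepsilon_w=\varepsilon_i^2\varepsilon_{w'}=\varepsilon_i\varepsilon_{w'}=\varepsilon_w$ using the idempotency $\varepsilon_i^2=\varepsilon_i$.

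Iterating this rule by induction on the length of the left factor (and symmetrically on the right), one sees that for all $u,w\in W_{\Gamma}$ one has $\varepsilon_u\varepsilon_w=\varepsilon_v$ for some $v$ satisfying $v\geq u$ and $v\geq w$ in the Bruhat order. Hence multiplication in $\mathcal{H}_{\Gamma}$, on either side, can only move a Weyl group element upward in the Bruhat order. $\mathcal{J}$-triviality then drops out formally: if $\varepsilon_u\,\mathcal{J}\,\varepsilon_v$, then each of $u,v$ lies in the principal two-sided ideal generated by the other, forcing $u\geq v$ and $v\geq u$ in the Bruhat order, and hence $u=v$. The main technical point I expect is justifying the descent case of the multiplication rule in the monoidal setting (so that the product in $\mathcal{H}_{\Gamma}$ really does realize the Demazure product on $W_{\Gamma}$); once that is established, the rest is bookkeeping with the Bruhat order.
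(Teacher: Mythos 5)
Your proposal is correct and follows essentially the same route as the paper: establish the bijection $w\mapsto \varepsilon_w$ and the Demazure-type multiplication rule (which the paper simply cites from Mathas's book, Lemma~1.12), then observe that left and right multiplication can only move an element upward, so distinct elements generate distinct principal two-sided ideals. The only cosmetic difference is that you phrase the monotonicity via the Bruhat order while the paper uses the length function, which already suffices.
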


\begin{proof}
For $w\in W_{\Gamma}$ denote by $H_w\in \mathcal{H}_{\Gamma}$ the corresponding
element (if $w=s_{i_1}s_{i_2}\cdots s_{i_k}$ is a reduced decomposition of
$w$ into a product of simple reflections, then 
$H_w=\varepsilon_{i_1}\varepsilon_{i_2}\cdots \varepsilon_{i_k}$). 
Let $l:W_{\Gamma}\to\{0,1,\dots\}$ denote the classical length
function. Then the usual multiplication properties of the Hecke algebra 
(\cite[Lemma~1.12]{Ma}) read as follows:
\begin{equation}\label{eq1}
\varepsilon_i H_w=
\begin{cases}
H_{s_iw} & l(s_iw)>l(w);\\
H_w&\text{ otherwise};
\end{cases}
\quad
H_w\varepsilon_i =
\begin{cases}
H_{ws_i} & l(ws_i)>l(w);\\
H_w&\text{ otherwise}.
\end{cases}
\end{equation}

Hence for any $w\in W_{\Gamma}$ the two-sided ideal 
$\mathcal{H}_{\Gamma}H_w\mathcal{H}_{\Gamma}$ consists of $H_w$
and, possibly, some elements of strictly bigger length. In particular,
for any $x\in \mathcal{H}_{\Gamma}H_w\mathcal{H}_{\Gamma}$ such that
$x\neq H_w$ we have $\mathcal{H}_{\Gamma}H_w\mathcal{H}_{\Gamma}\neq \mathcal{H}_{\Gamma}x\mathcal{H}_{\Gamma}$. The claim follows.
\end{proof}

As any quotient of a finite $\mathcal{J}$-trivial semigroup is 
$\mathcal{J}$-trivial (see e.g. \cite[Chapter~VI, Section~5]{Ll}),
statement \eqref{thm1.1} follows from Lemma~\ref{lem2}.

\subsection{Proof of statement \eqref{thm1.2}}\label{s2.2}

The set $\mathbf{E}$ generates $\mathbf{K}\mathcal{H}_{\vec{\Gamma}}$
by definition. We claim that this generating system is irreducible.
Indeed, if we can write $e_i$ as a product $w$ of generators,
then $\mathfrak{c}(w)=\{i\}$, implying $w=e_i$. Hence 
$\mathbf{E}$ is irreducible. Further,
we know that $\mathbf{K}\mathcal{H}_{\vec{\Gamma}}$
is $\mathcal{J}$-trivial from statement \eqref{thm1.1}.
Uniqueness of the irreducible generating system in a 
$\mathcal{J}$-trivial monoid was established in \cite[Theorem~2]{Do}.
This implies statement \eqref{thm1.2}.

\subsection{Proof of statement \eqref{thm1.3}}\label{s2.3}
Identify $\Gamma_0$ with $\{1,2,\dots,n\}$ such that 
$\xymatrix{i\ar[r]&j}$ implies $i>j$ for all $i$ and $j$.
Then the mapping $e_i\mapsto \varepsilon_i$, $i\in \{1,2,\dots,n\}$,
extends to an epimorphism $\psi:\mathbf{K}_n\tto
\mathbf{K}\mathcal{H}_{\vec{\Gamma}}$ (as all relations for
generators of $\mathbf{K}_n$ are satisfied by the corresponding
generators of $\mathbf{K}\mathcal{H}_{\vec{\Gamma}}$).

By \cite{KM}, the semigroup $\mathbf{K}_n$ has exactly $2^n$
idempotents, all having different contents. As $\psi$ preserves
the content, we obtain $2^n$ different idempotents in 
$\mathbf{K}\mathcal{H}_{\vec{\Gamma}}$. As any epimorphism
of finite semigroups induces an epimorphism on the corresponding
sets of idempotents, the statement \eqref{thm1.3} follows.

For completeness, we include the following statement which describes 
idempotents in $\mathcal{H}_{\Gamma}$ in terms of longest elements
for parabolic subgroups of $W_{\Gamma}$ (this claim can also be deduced from
\cite[Lemma~2.2]{No}).

\begin{lemma}\label{lem4}
For any $X\subset \Gamma_0$ left $w_X$ denote the longest element 
in the parabolic subgroup of $W_{\Gamma}$ associated with 
$X$ ($w_{\varnothing}=e$).
Then $H_{w_X}\in \mathcal{H}_{\Gamma}$ is an idempotent, and
every idempotent of $\mathcal{H}_{\Gamma}$ has the form 
$H_{w_X}$ for some $X$ as above. In particular, 
$\mathcal{H}_{\Gamma}$ has $2^n$ idempotents.
\end{lemma}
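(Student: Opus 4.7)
The plan is to establish a bijection between subsets $X\subseteq\Gamma_0$ and idempotents of $\mathcal{H}_{\Gamma}$, via $X\mapsto H_{w_X}$. Both implications will be reduced to the length-monotonicity in \eqref{eq1}: one-sided multiplication by any $\varepsilon_i$ never decreases the length of $H_w$.

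First I would verify that $H_{w_X}$ is indeed idempotent. Since $w_X$ is the longest element of the parabolic $W_X$, every $i\in X$ is a right descent of $w_X$, i.e.\ $l(w_X s_i)<l(w_X)$, so \eqref{eq1} gives $H_{w_X}\varepsilon_i=H_{w_X}$ for each $i\in X$. Choosing a reduced expression $w_X=s_{i_1}\cdots s_{i_k}$ (whose letters all lie in $X$) and writing $H_{w_X}^2=H_{w_X}\cdot\varepsilon_{i_1}\varepsilon_{i_2}\cdots\varepsilon_{i_k}$, iterated absorption from the left yields $H_{w_X}^2=H_{w_X}$.

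For the converse, suppose $H_w\in\mathcal{H}_{\Gamma}$ is idempotent, and let $X\subseteq\Gamma_0$ be the \emph{support} of $w$, i.e.\ the set of indices occurring in some (equivalently, any) reduced expression for $w$; then $w\in W_X$ and $D_R(w)\subseteq X$. I claim also that $X\subseteq D_R(w)$, so that $D_R(w)=X$ and hence $w=w_X$ (being the unique element of $W_X$ whose right descent set contains $X$). To prove the claim, fix a reduced expression $w=s_{i_1}\cdots s_{i_k}$ and set $H_{u_j}:=H_w\cdot\varepsilon_{i_1}\cdots\varepsilon_{i_j}$, with $u_0=w$. By \eqref{eq1}, each step satisfies $l(u_j)\geq l(u_{j-1})$, with equality exactly when $i_j\in D_R(u_{j-1})$ (in which case $u_j=u_{j-1}$). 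Since $H_{u_k}=H_w^2=H_w$, the lengths $l(u_0),\ldots,l(u_k)$ all equal $l(w)$, forcing $u_{j-1}=w$ and $i_j\in D_R(w)$ for every $j$. Thus every letter of the chosen reduced word lies in $D_R(w)$, giving $X\subseteq D_R(w)$ as required. The bijectivity $X\mapsto H_{w_X}$ is then clear (distinct $X$ give distinct longest parabolic elements, hence distinct monoid elements), yielding the $2^n$ count.

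The main obstacle is the converse direction, specifically the identification of the support of an idempotent $H_w$ with its right descent set; this is where one-sided length monotonicity is doing the real work, promoting the purely combinatorial observation $H_w^2=H_w$ into the rigid structural conclusion that $w$ must be maximal in its parabolic. Everything else is bookkeeping with \eqref{eq1}.
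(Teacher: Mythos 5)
Your proposal is correct and follows essentially the same route as the paper: use the length-monotonicity in \eqref{eq1} to show that idempotency of $H_w$ forces every generator in the support of $w$ to be a descent, and then invoke the characterization (from Bj{\"o}rner--Brenti, Section~2.3) of the longest element of a parabolic subgroup via its descent set; the forward direction by iterated absorption is likewise the paper's argument. The only cosmetic difference is that you work with right descents alone, where the paper records both left and right descent sets before citing the same fact.
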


\begin{proof}
Let $w\in W_{\Gamma}$. Assume that $H_w$ is an idempotent.
From \eqref{eq1} it follows that $H_wH_w=H_w$ implies that 
$\varepsilon_i H_w=H_w\varepsilon_i=H_w$ for any $i\in\mathfrak{c}(H_w)$.
In particular, for any $i\in\mathfrak{c}(H_w)$ we have
$l(s_iw)<l(w)$ and $l(ws_i)<l(w)$, in other words, both the left and
the right descent sets of $w$ contain all simple reflections
appearing in any reduced decomposition of $w$. From \cite[2.3]{BB} 
it now follows that $w$ is the longest element of the parabolic subgroup
of $W_{\Gamma}$, generated by all $s_i$, $i\in\mathfrak{c}(H_w)$. 

On the other hand, if $w$ is the longest element from some parabolic subgroup
of $W_{\Gamma}$, then the same arguments imply 
$\varepsilon_i H_w=H_w\varepsilon_i=H_w$ for any $i\in\mathfrak{c}(H_w)$
and hence $H_wH_w=H_w$. The claim follows.
\end{proof}

\subsection{Proof of statement \eqref{thm1.4}}\label{s2.4}

This statement follows from a more general statement of
Theorem~\ref{thm4}, which will be proved in Subsection~\ref{s5.3}.

\subsection{Proof of statement \eqref{thm1.5}}\label{s2.5}

By Proposition~\ref{prop7}, which will be proved in a more
general situation in Subsection~\ref{s5.1}, the semigroups
$\mathbf{K}\mathcal{H}_{\vec{\Gamma}}$ and
$\mathbf{K}\mathcal{H}_{\vec{\Lambda}}$ are anti-isomorphic
if and only if $\mathbf{K}\mathcal{H}_{\vec{\Gamma}}$
and $\mathbf{K}\mathcal{H}_{\vec{\Lambda}^{\mathrm{op}}}$
are isomorphic. By statement \eqref{thm1.4}, the latter is the case
if and only if $\vec{\Gamma}$ and 
$\vec{\Lambda}^{\mathrm{op}}$ are isomorphic,
which implies statement \eqref{thm1.5}.

\subsection{Proof of statement \eqref{thm1.6}}\label{s2.6}

Since $\Gamma$ is now of type $A_n$, the group $W_{\Gamma}$ is isomorphic 
to the  symmetric group $S_{n+1}$. Consider the canonical projection
$\mathcal{H}_{\Gamma}\tto\mathbf{K}\mathcal{H}_{\vec{\Gamma}}$. Then any
equivalence class of the kernel of this projection contains
some element of minimal possible length (maybe not unique). 
Let $H_w$ be such an element and $w=s_{i_1}s_{i_2}\dots s_{i_k}$ 
be a reduced decomposition in $W_{\Gamma}$. Then this reduced 
decomposition cannot contain any subword of the
form $s_is_{j}s_i$ (where $i$ and $j$ are connected in $\Gamma$), 
in other words, $w$ is a  {\em short-braid avoiding} permutation. 
Indeed, otherwise $H_w$ would be equivalent to $H_{w'}$, where $w'$ is a 
shorter word obtained from $w$ by changing $s_is_{j}s_i$ to either 
$s_is_{j}$ or $s_{j}s_i$ depending on the direction of the arrow
between $i$ and $j$ in $\vec{\Gamma}$, which would contradict 
our choice of  $w$. 

Therefore the cardinality of $\mathbf{K}\mathcal{H}_{\vec{\Gamma}}$
does not exceed the number of short-braid avoiding
elements in $S_{n+1}$. These are known to correspond to 
$321$-avoiding permutations (see e.g. \cite[Theorem~2.1]{BJS}). 
The number of $321$-avoiding permutations in $S_{n+1}$  is known to 
be $C_{n+1}$ (see e.g. \cite[6.19(ee)]{St1}). Statement 
\eqref{thm1.6} follows.

\subsection{Proof of statement \eqref{thm1.7}}\label{s2.7}

Assume first that $\vec{\Gamma}$ coincides with
\begin{equation}\label{eq2}
\xymatrix{ 
1 & 2\ar[l] & 3\ar[l] & \ar[l]\dots &\ar[l] n
}.
\end{equation}
From \eqref{thm1.6} we already know that 
$|\mathbf{K}\mathcal{H}_{\vec{\Gamma}}|\leq C_{n+1}$. 
For $i=1,2,\dots,n$ denote by $T_i$ the following transformation of
$\{1,2,\dots,n+1\}$:
\begin{equation}\label{eq31}
\left(\begin{array}{cccccccccc}
1&2&\dots&i-1&i&i+1&i+2&\dots&n&n+1\\
1&2&\dots&i-1&i&i&i+2&\dots&n&n+1\\
\end{array}\right).
\end{equation}
The semigroup $\mathcal{C}_{n+1}$, generated by the $T_i$'s is the
semigroup of all order-decreasing and order-preserving total transformations
on the set $\{1,2,\dots,n+1\}$, see \cite[Chapter~14]{GM}. One easily checks 
that the $T_i$'s are idempotent, that $T_iT_j=T_jT_i$ if $|i-j|>1$
and that $T_iT_{i+1}T_i=T_{i+1}T_iT_{i+1}=T_{i}T_{i+1}$ for all 
$i=1,2,\dots,n-1$. Therefore, sending $\varepsilon_i$ to
$T_{n+1-i}$ for all $i$ defines an epimorphism from 
$\mathbf{K}\mathcal{H}_{\vec{\Gamma}}$ to $\mathcal{C}_{n+1}$.
As $|\mathcal{C}_{n+1}|=C_{n+1}$ by \cite[6.25(g)]{St1}, we obtain that
$|\mathbf{K}\mathcal{H}_{\vec{\Gamma}}|\geq C_{n+1}$ and hence
$|\mathbf{K}\mathcal{H}_{\vec{\Gamma}}|= C_{n+1}$.

Assume now that $\vec{\Gamma}$ is not isomorphic to
\eqref{eq2}. Then either $\vec{\Gamma}$ or 
$\vec{\Gamma}^{\mathrm{op}}$ must contain the
following full subgraph:
\begin{equation}\label{eq3}
\xymatrix{ 
i\ar[r]&j&\ar[l]k
}.
\end{equation}
Using \eqref{thm1.5} and the fact that 
$|\mathbf{K}\mathcal{H}_{\vec{\Gamma}}|=
|\mathbf{K}\mathcal{H}_{\vec{\Gamma}}^{\mathrm{op}}|$,
without loss of generality we may assume that 
$\vec{\Gamma}$ contains \eqref{eq3}. It is easy to see
that the element $s_js_is_ks_j\in W_{\Gamma}$ is short-braid avoiding. On the 
other hand, because of the arrows 
$\xymatrix{i\ar[r]&j}$ and $\xymatrix{k\ar[r]&j}$ we have
\begin{displaymath}
\varepsilon_j\varepsilon_i\varepsilon_k\varepsilon_j=
\varepsilon_j\varepsilon_i\varepsilon_j\varepsilon_k\varepsilon_j=
\varepsilon_j\varepsilon_i\varepsilon_j\varepsilon_k=
\varepsilon_j\varepsilon_i\varepsilon_k.
\end{displaymath}
Note that $s_js_is_k$ is again short-braid avoiding. It follows that 
in this case some different short-braid avoiding permutations 
correspond to equal elements of $\mathbf{K}\mathcal{H}_{\vec{\Gamma}}$. 
Hence $|\mathbf{K}\mathcal{H}_{\vec{\Gamma}}|$
is strictly smaller than the total number of short-braid avoiding
permutations, implying statement \eqref{thm1.7}.

\subsection{Proof of statement \eqref{thm1.8}}\label{s2.8}
Statement \eqref{thm1.8} follows from the observation that the epimorphism
from $\mathbf{K}\mathcal{H}_{\vec{\Gamma}}$ to $\mathcal{C}_{n+1}$,
constructed in the first part of our proof of statement \eqref{thm1.7},
is in fact an isomorphism as 
$|\mathbf{K}\mathcal{H}_{\vec{\Gamma}}|=|\mathcal{C}_{n+1}|= C_{n+1}$.

\section{Representations of 
$\mathbf{K}\mathcal{H}_{\vec{\Gamma}}$}\label{s3}

In this section $\Gamma$ is a disjoint union of Dynkin diagrams and
$\vec{\Gamma}$ is obtained from $\Gamma$ by orienting all edges in some way.

\subsection{Representations by total transformations}\label{s3.1}

In this subsection we generalize the action described in 
Subsection~\ref{s2.8}. In order to minimize the cardinality of the
set our transformations operate on, we assume that $\vec{\Gamma}$ is such 
that the indegree of the triple point of 
$\vec{\Gamma}$ (if such a point exists)  is at most one. 
This is always satisfied either by $\vec{\Gamma}$ or 
by $\vec{\Gamma}^{\mathrm{op}}$. In type $A$ we have no restrictions.
Using the results of  Subsection~\ref{s2.5}, we thus construct either 
a left or a right action of $\mathbf{K}\mathcal{H}_{\vec{\Gamma}}$ 
for every $\vec{\Gamma}$.

Consider the set $M$ defined as the disjoint union of the 
following sets: the set 
$\vec{\Gamma}_1$ of all edges in $\vec{\Gamma}$, the set 
$\vec{\Gamma}_0^0$ of all sinks in $\vec{\Gamma}$ (i.e. vertexes of
outdegree zero), the set $\vec{\Gamma}_0^1$ of all sinks in 
$\vec{\Gamma}$ of indegree two, and the set 
$\vec{\Gamma}_0^2$ of all sources in $\vec{\Gamma}$ (i.e. vertexes
of indegree zero). Fix some injection $g:\vec{\Gamma}_0^0\cup 
\vec{\Gamma}_0^1
\to \vec{\Gamma}_1$ which maps a vertex to some edge terminating in this
vertex (this is uniquely defined if the indegree of our vertex is
one, but there is a choice involved if this indegree is two).
Note that under our assumptions any vertex which is not a sink 
has indegree at most one.

For $i\in\Gamma_0$ define the total transformation $\tau_i$ of $M$
as follows:
\begin{equation}\label{eq4}
\tau_i(x)=
\begin{cases}
y,& \xymatrix{\ar[r]|-y&i\ar[r]|-x&};\\
i,& \xymatrix{i\ar[r]|-x&} \text{ and $i$ is a source};\\
g(i),& \text{$x=i$ is a sink};\\
x,&\text{ otherwise}.
\end{cases}
\end{equation}

\begin{proposition}\label{prop21}
Formulae $($\ref{eq4}$)$ define a representation of 
$\mathbf{K}\mathcal{H}_{\vec{\Gamma}}$ by total transformations on $M$.
\end{proposition}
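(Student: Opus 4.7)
The plan is to verify directly that the transformations $\tau_i$ satisfy the three families of defining relations of $\mathbf{K}\mathcal{H}_{\vec{\Gamma}}$: each $\tau_i$ must be idempotent; we must have $\tau_i\tau_j=\tau_j\tau_i$ whenever $i$ and $j$ are not connected in $\Gamma$; and we must have $\tau_i\tau_j\tau_i=\tau_j\tau_i\tau_j=\tau_i\tau_j$ whenever $\vec{\Gamma}$ contains an arrow $i\to j$.

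First I would record a \emph{locality} observation read off from \eqref{eq4}: the transformation $\tau_i$ fixes every element of $M$ except possibly the outgoing edges of $i$ and, when $i$ is a sink, the copies of $i$ living in $\vec{\Gamma}_0^0$ and $\vec{\Gamma}_0^1$. From this, idempotency is immediate: every possible image of $\tau_i$ (an incoming edge $y$ of $i$, the vertex $i$ viewed as a source, or the edge $g(i)$ terminating at the sink $i$) is neither an outgoing edge of $i$ nor a sink-copy of $i$, hence is a fixed point of $\tau_i$. The commutation $\tau_i\tau_j=\tau_j\tau_i$ for non-adjacent $i,j$ then follows because the supports of $\tau_i$ and $\tau_j$ are disjoint.

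The main work is the mixed Hecke-Kiselman relation for an arrow $i\to j$, which I would establish by a finite case analysis parametrised by (a) whether $i$ is a source, and (b) whether $j$ is a sink, and, if so, whether its indegree is $1$ or $2$. The assumption on the indegree of the triple point guarantees that any vertex which is not a sink (in particular $i$, since $i$ has outgoing edge $e=(i\to j)$) has at most one incoming edge, so the element $y$ appearing in the first clause of \eqref{eq4} is unambiguous. The uniform fact one extracts from the case analysis is that $\tau_i\tau_j$ acts as a \emph{collapse}: setting $v=y$ if $i$ has an incoming edge and $v=i$ if $i$ is a source, the composition $\tau_i\tau_j$ sends the edge $e$, every other outgoing edge of $i$, every outgoing edge of $j$, and the sink-copy of $j$ assigned by $g$ to $e$ all to the common value $v$, while fixing every other locally relevant element (notably any second incoming edge $f$ of $j$ and the sink-copy of $j$ that $g$ sends to $f$). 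Since $v$ is neither an outgoing edge of $i$ nor a sink-copy of $j$, it is fixed by both $\tau_i$ and $\tau_j$, and the remaining locally fixed elements are fixed for the same reason; the identities $\tau_i\tau_j\tau_i=\tau_i\tau_j$ and $\tau_j\tau_i\tau_j=\tau_i\tau_j$ follow at once.

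The main obstacle is the bookkeeping when $j$ is a sink of indegree $2$, in which case $j$ contributes two distinct elements to $M$. The injectivity requirement on $g$ is precisely what makes the argument work here: $g$ must send the two copies of $j$ to the two distinct incoming edges of $j$, so the copy associated with $e$ collapses to $v$ under $\tau_i\tau_j$ while the other copy is routed by $\tau_j$ to the fixed incoming edge $f$ (which is then untouched by $\tau_i$). With this routing in place, the three-term identities on each copy of $j$ reduce to the same collapse argument as in the generic case, and the proposition follows.
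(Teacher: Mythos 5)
Your proposal is correct and takes essentially the same route as the paper: both verify the defining relations of $\mathbf{K}\mathcal{H}_{\vec{\Gamma}}$ directly, reducing the mixed relation for an arrow $i\to j$ to a local analysis of the elements of $M$ near that arrow (the paper lists the three possible local pictures and leaves the check to the reader, while your uniform ``collapse to $v$'' observation packages that same case check more systematically). The only point worth tightening is the commutation step: disjoint supports alone do not give $\tau_i\tau_j=\tau_j\tau_i$ --- you also need that the image of the support of $\tau_i$ (incoming edges of $i$ or the source copy of $i$) consists of fixed points of $\tau_j$, which follows immediately from your locality observation when $i$ and $j$ are not adjacent.
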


\begin{proof}
To prove the claim we have to check that the $\tau_i$'s satisfy
the defining relations for $\mathbf{K}\mathcal{H}_{\vec{\Gamma}}$.
Relations $\tau_i^2=\tau_i$ and  $\tau_i\tau_j=\tau_j\tau_i$ if 
$i$ and $j$ are not connected follow directly from the definitions.
So, we are left to check that
$\tau_i\tau_j\tau_i=\tau_j\tau_i\tau_j=\tau_i\tau_j$ if we have
\begin{displaymath}
\xymatrix{*+[F]{\Lambda}&i\ar[r]\ar@{.}[l]&j\ar@{.}[r]&*+[F]{\Lambda'}}. 
\end{displaymath}
Every point in $M$ coming from $\Lambda$ or $\Lambda'$ is invariant 
under both $\tau_i$ or $\tau_j$, so on such elements the relations are 
obviously satisfied. 

The above reduces checking of our relation to the elements coming
from the following local situations:
\begin{gather*}
\xymatrix{&i\ar[r]\ar[l]&j\ar[r]&},\\
\xymatrix{\ar[r]&i\ar[r]&j&\ar[l]},\\
\xymatrix{\ar[r]&i\ar[r]&j\ar[r]&}.
\end{gather*}
In all these cases all relations are easy to check 
(and the nontrivial  ones reduce to the corresponding 
relations for the representation considered in Subsection~\ref{s2.8}).
This completes the proof.
\end{proof}

\begin{question}\label{qes1}
{\rm  
Is the representation constructed above faithful?
}
\end{question}

If $\vec{\Gamma}$ is given by \eqref{eq2}, then $M$ contains $n+1$
elements and it is easy to see that it is equivalent to the 
representation  considered in Subsection~\ref{s2.8}. In particular, as was
shown there, this representation is faithful. So in this case the
answer to Question~\ref{qes1} is positive.

\subsection{Linear integral representations}\label{s3.2}

Let $V$ denote the free abelian group generated by 
$v_i$, $i\in\Gamma_0$. For $i\in \Gamma_0$ define the homomorphism
$\theta_i$ of $V$ as follows:
\begin{displaymath}
\theta_i(v_j)=
\begin{cases}
v_j, & i\neq j;\\
\displaystyle \sum_{k\to i} v_k, & i=j. 
\end{cases}
\end{displaymath}

\begin{proposition}\label{prop22}
Mapping $\varepsilon_i$ to $\theta_i$ extends uniquely to a
homomorphism from $\mathbf{K}\mathcal{H}_{\vec{\Gamma}}$ to the semigroup
$\mathrm{End}_{\mathbb{Z}}(V)$.
\end{proposition}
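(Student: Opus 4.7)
The plan is to verify that the family $\{\theta_i\}_{i \in \Gamma_0}$ satisfies the defining relations of $\mathbf{K}\mathcal{H}_{\vec{\Gamma}}$, after which the universal property of the presentation delivers the homomorphism (and makes it unique, since $\mathbf{K}\mathcal{H}_{\vec{\Gamma}}$ is generated by the $\varepsilon_i$). The key structural observation driving every calculation is that $\theta_i$ fixes $v_k$ for every $k \neq i$, while $\theta_i(v_i) = \sum_{k \to i} v_k$ is itself a $\mathbb{Z}$-linear combination of basis vectors $v_k$ with $k \neq i$. Consequently, any vector in the image of $\theta_i$ is pointwise fixed by $\theta_i$, and $\theta_j$ alters the coefficient of $v_k$ only when $k = j$.

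With this in mind, idempotence $\theta_i^2 = \theta_i$ is immediate: both sides fix $v_k$ for $k \neq i$, and on $v_i$ the second application of $\theta_i$ fixes every $v_k$ in the sum $\sum_{k \to i} v_k$. For non-adjacent $i, j$, the relation $\theta_i \theta_j = \theta_j \theta_i$ reduces to the actions on $v_i$ and on $v_j$: applying $\theta_j$ after $\theta_i(v_i) = \sum_{l \to i} v_l$ leaves the sum unchanged because non-adjacency of $i$ and $j$ forces $l \neq j$ for every $l \to i$, and the symmetric argument handles $v_j$.

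The main computation is the oriented-edge relation $\theta_i \theta_j \theta_i = \theta_j \theta_i \theta_j = \theta_i \theta_j$ for $i \to j$. The essential input is simplicity of the underlying graph, which rules out a simultaneous arrow $j \to i$; hence $l \neq j$ for every $l \to i$. One computes
\[
\theta_i \theta_j(v_j) \;=\; \theta_i\Bigl(\sum_{m \to j} v_m\Bigr) \;=\; \sum_{l \to i} v_l \;+\; \sum_{\substack{m \to j \\ m \neq i}} v_m,
\]
and this vector is fixed by $\theta_i$ (each summand has index different from $i$) and by $\theta_j$ (each summand has index different from $j$, using $l \neq j$ for the first sum and $m \neq j$ trivially for the second). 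The actions on $v_i$ and on $v_k$ for $k \notin \{i, j\}$ reduce to analogous straightforward comparisons. The main obstacle is really just the bookkeeping in this last relation—keeping track of which indices among $l \to i$ and $m \to j$ can coincide with $i$ or $j$; once one invokes graph-simplicity to exclude $j \to i$ whenever $i \to j$, all three composite operators collapse to the same explicit expression on every basis vector.
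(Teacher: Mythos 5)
Your proof is correct and takes essentially the same route as the paper's: one verifies the defining relations on each basis vector $v_k$, using that $\Gamma$ has no loops and that an arrow $i\to j$ excludes $j\to i$. Your preliminary observation that $\theta_i$ fixes its image pointwise merely streamlines the same case analysis (and you are in fact slightly more explicit than the paper, whose final step cites only the absence of loops, about also needing the absence of a reverse arrow $j\to i$).
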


\begin{proof}
To prove the claim we have to check that the $\theta_i$'s satisfy
the defining relations for $\mathbf{K}\mathcal{H}_{\vec{\Gamma}}$.
We do this below. 

{\em Relation $\theta_i^2=\theta_i$.} If $j\neq i$, then
$\theta_i^2(v_j)=\theta_i(v_j)=v_j$ by definition. As $\Gamma$
contains no loops, we also have 
\begin{displaymath}
\theta_i^2(v_i)=\theta_i(\sum_{k\to i} v_k)=
\sum_{k\to i}\theta_i(v_k)\overset{k\neq i}{=}\sum_{k\to i} v_k=
\theta_i(v_i).
\end{displaymath}

{\em Relation $\theta_i\theta_j=\theta_j\theta_i$ if
$i$ and $j$ are not connected.} If $k\neq i,j$, then 
$\theta_i\theta_j(v_k)=\theta_j\theta_i(v_k)=v_k$ by definition.
By symmetry, it is left to show that 
$\theta_i\theta_j(v_i)=\theta_j\theta_i(v_i)$. We have
\begin{displaymath}
\theta_i\theta_j(v_i)\overset{j\neq i}{=}
\theta_i(v_i)=\sum_{k\to i} v_k\overset{k\neq j}{=}
\sum_{k\to i} \theta_j(v_k)=
\theta_j(\sum_{k\to i} v_k)=
\theta_j\theta_i(v_i).
\end{displaymath}

{\em Relation $\theta_i\theta_j\theta_i=\theta_j\theta_i\theta_j
=\theta_i\theta_j$ if we have $\xymatrix{i\ar[r]&j}$.} If $k\neq i,j$, 
then $\theta_i\theta_j(v_k)=\theta_j\theta_i(v_k)=v_k$ by definition
and our relation is satisfied. Further we have 
\begin{displaymath}
\theta_i\theta_j\theta_i(v_i)=
\sum_{k\to i} \theta_i\theta_j(v_k)
\overset{k\neq i,j}{=}\sum_{k\to i} v_k
\end{displaymath}
ans similarly both $\theta_j\theta_i\theta_j(v_i)$ and 
$\theta_i\theta_j(v_i)$ equal $\sum_{k\to i} v_k$ as well. Finally, 
we have 
\begin{multline*}
\theta_i\theta_j\theta_i(v_j)\overset{i\neq j}{=}
\theta_i\theta_j(v_j)=\theta_i(
\sum_{k\to j} v_k)=\sum_{k\to j} \theta_i(v_k)=\\
=\theta_i(v_i)+\sum_{k\to j, k\neq i} \theta_i(v_k)=
\sum_{k\to i} v_k+ \sum_{k\to j, k\neq i} v_k.
\end{multline*}
As $\Gamma$ contains no loops, the result is obviously preserved 
by $\theta_j$ giving the desired relation.
This completes the proof.
\end{proof}

The representation given by Proposition~\ref{prop22} is a generalization
of Kiselman's representation for $\mathbf{K}_n$, see \cite[Section~5]{KM}.
Using the canonical anti-involution (transposition) for linear operators
and Subsection~\ref{s2.5}, from the above we also obtain a representation 
for $\mathbf{K}\mathcal{H}_{\vec{\Gamma}}^{\mathrm{op}}$.

\begin{question}\label{qes2}
{\rm  
Is the representation constructed above faithful
(as semigroup representation)?
}
\end{question}

If $\vec{\Gamma}$ is given by \eqref{eq2}, then the linear representation
of $\mathbf{K}\mathcal{H}_{\vec{\Gamma}}$ given by Proposition~\ref{prop22} is
just a linearization of the representation from Subsection~\ref{s3.1}.
Hence from Subsection~\ref{s2.8} it follows that the answer to 
Question~\ref{qes2} is positive in this case.

If we identify linear operators on $V$ with $n\times n$ integral
matrices with respect to the basis $\{v_i:i\in\Gamma_0\}$, we obtain
a representation of $\mathbf{K}\mathcal{H}_{\vec{\Gamma}}$ by $n\times n$ 
matrices with non-negative integral coefficients. Call this
representation $\Theta$. 

\begin{lemma}\label{lem26}
The representation 
$\Theta$ is a representation of $\mathbf{K}\mathcal{H}_{\vec{\Gamma}}$ by
$(0,1)$-matrices (i.e. matrices with coefficients $0$ or $1$).
\end{lemma}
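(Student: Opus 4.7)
The plan is to prove by induction on the length of a word $w$ in the generators $\varepsilon_i$ that for each fixed $j\in\Gamma_0$ one has $\Theta(w)v_j=\sum_{l\in S}v_l$ for some subset $S\subseteq\Gamma_0$ (coefficient $1$ on each $v_l$). Since this identifies the $j$-th column of the matrix of $\Theta(w)$ as a $(0,1)$-column, iterating over $j$ yields the claim. A naive form of this induction fails: if $i\in S$, then $\theta_i(\sum_{l\in S}v_l)=\sum_{l\in S\setminus\{i\}}v_l+\sum_{k\to i}v_k$, which acquires a coefficient of $2$ whenever some predecessor of $i$ already lies in $S$. Thus the inductive hypothesis must be strengthened.

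To formulate the stronger invariant, fix $j$ and let $T_j$ denote the subtree of $\Gamma$ on the vertex set $\{k\in\Gamma_0:k\preceq j\}$, where $k\preceq l$ means there is a directed path from $k$ to $l$ in $\vec{\Gamma}$. Root $T_j$ at $j$, so that the children of any $v\in T_j$ are precisely $\mathrm{pred}(v):=\{k:k\to v\}$; this is well-defined because $\Gamma$, being a disjoint union of simply laced Dynkin diagrams, is a forest. I will prove that $\Theta(w)v_j=\sum_{l\in S}v_l$ for some antichain $S\subseteq T_j$, meaning that no element of $S$ is a proper descendant of another in $T_j$. This invariant already implies the desired $(0,1)$-conclusion, since an arrow $k\to l$ in $\vec{\Gamma}$ with both $k,l\in S\subseteq T_j$ would make $k$ a child of $l$ in $T_j$, contradicting antichain-ness.

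The base case $w=1$ is immediate with $S=\{j\}$. For the inductive step I would multiply by $\theta_i$ on the left. If $i\notin S$ the set is unchanged. If $i\in S$, the candidate new set is $S':=(S\setminus\{i\})\cup\mathrm{pred}(i)$. First, this union is disjoint: a common element would be a child of $i$ in $T_j$ that lies in $S$, hence a descendant of $i$ inside $S$, contradicting antichain-ness; this already ensures the resulting sum has coefficients in $\{0,1\}$. Second, $S'$ remains an antichain in $T_j$: for two distinct $a,b\in S'$, the subcases $a,b\in S\setminus\{i\}$ and $a,b\in\mathrm{pred}(i)$ are settled by the inductive hypothesis and by the fact that distinct siblings in a rooted tree are incomparable, respectively; in the mixed subcase $a\in S\setminus\{i\}$, $b\in\mathrm{pred}(i)$, one uses that the subtrees of $T_j$ rooted at the antichain-incomparable vertices $a$ and $i$ are disjoint in any rooted tree, so the descendant $b$ of $i$ cannot be comparable with $a$ in $T_j$.

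The main obstacle is picking the right invariant: tracking only that $\Theta(w)v_j$ has $(0,1)$-support is too weak to survive a single inductive step, because it yields no control over overlap between $S\setminus\{i\}$ and $\mathrm{pred}(i)$. Upgrading to ``antichain in $T_j$'' is the minimal additional combinatorial data that propagates through the induction, and once it is in place the verification at each step uses only two standard tree-theoretic facts: the absence of cycles in a forest, and the disjointness of subtrees rooted at incomparable vertices of a rooted tree.
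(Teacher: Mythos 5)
Your proof is correct, and it follows the same basic strategy as the paper's: induction on the length of a word, tracking the support of the image of each basis vector $v_j$, and using that $\Gamma$ is a forest to prevent any coefficient from exceeding $1$. The difference is in the decomposition and the inductive invariant. The paper peels off the \emph{last} generator, handles the column of $v_{i_p}$ by a nested induction asserting only that the support of $\theta_{\alpha}(v_{i_p})$ consists of vertices admitting a directed path to $i_p$, and then dismisses the possibility of a coefficient $2$ with the one-line remark that $\Gamma$ contains no loops --- the disjointness of the supports contributed by distinct predecessors of $i_p$, which is what that remark is really invoking, is left implicit. You instead peel off the \emph{first} generator and carry the explicit invariant that the support of $\Theta(w)v_j$ is an antichain in the ancestor tree $T_j$ rooted at $j$; this is exactly the statement needed to make the no-overlap argument airtight at each step (disjointness of $S\setminus\{i\}$ and $\mathrm{pred}(i)$, plus preservation of antichain-ness via standard facts about rooted trees). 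So your argument is best viewed as a rigorous filling-in of the tersest step of the paper's proof rather than a different method, and what it buys is precisely that the acyclicity of $\Gamma$ enters through two clean, checkable tree facts instead of an appeal to intuition. One small expository quibble: the $(0,1)$ conclusion already follows from the support being a \emph{set}, so your remark that the antichain condition forbids arrows inside $S$ is not needed for the conclusion itself, only for propagating the induction.
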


\begin{proof}
For $\alpha\in\mathbf{K}\mathcal{H}_{\vec{\Gamma}}$ we show that $\Theta(\alpha)$
is a $(0,1)$-matrix by induction on the length of $\alpha$ (that is the
length of the shortest decomposition of $\alpha$ into a product of
canonical generators). If $\alpha=\varepsilon$, the claim is obvious.
If $\alpha$ is a generator, the claim follows from the definition
of $\Theta$ (as $\Gamma$ is a simple graph).

Let $\theta_{\alpha}$ denote the homomorphism of $V$ corresponding to 
$\alpha$. To prove the induction step we consider some shortest decomposition
$\alpha=\varepsilon_{i_1}\varepsilon_{i_2}\cdots\varepsilon_{i_p}$
and set $\beta=\varepsilon_{i_1}\varepsilon_{i_2}\cdots\varepsilon_{i_{p-1}}$.
Then for any $j\neq i_p$ we have
$\theta_{\alpha}(v_j)=\theta_{\beta}\theta_{i_p}(v_j)=\theta_{\beta}(v_j)$, 
which is a $(0,1)$-linear combination of the $v_k$'s 
by the inductive assumption.

For $v_{i_p}$ we use induction on $p$ to show that 
$\theta_{\alpha}(v_{i_p})$ is a $(0,1)$-linear combination of 
$v_k$ such that there is a path from $k$ to $i_p$ in $\vec{\Gamma}$.
In the case $p=1$ this follows from the definition of $\Theta$.
For the induction step, the part that $\theta_{\alpha}(v_{i_p})$ is a
linear combination of  $v_k$ such that there is a path from $k$ to 
$i_p$ in $\vec{\Gamma}$ follows from the definition of $\Theta$. 
The part that coefficients are only $0$ or $1$ follows from the fact 
that $\Gamma$ contains no loops.  This completes the proof.
\end{proof}

\subsection{Representations by binary relations}\label{s3.3}

Consider the semigroup $\mathfrak{B}(\Gamma_0)$ of all binary 
relations on $\Gamma_0$. Fixing some bijection between $\Gamma_0$
and $\{1,2,\dots,n\}$, we may identify $\mathfrak{B}(\Gamma_0)$ with
the semigroup of all $n\times n$-matrices with coefficients $0$ or
$1$ under the natural multiplication (the usual matrix multiplication
after which all nonzero entries are treated as $1$). This identifies
$\mathfrak{B}(\Gamma_0)$ with the quotient of the semigroup 
$\mathrm{Mat}_{n\times n}(\mathbb{N}_0)$ 
(here $\mathbb{N}_0=\{0,1,2,\dots\}$) modulo the congruence
for which two matrices are equivalent if and only if they have the
same zero entries. 

As the image of the linear representation $\Theta$ 
(and also of its transpose) constructed in
Subsection~\ref{s3.2} belongs to 
$\mathrm{Mat}_{n\times n}(\mathbb{N}_0)$, composing it with the
natural projection $\mathrm{Mat}_{n\times n}(\mathbb{N}_0)\tto
\mathfrak{B}(\Gamma_0)$ we obtain a representation $\Theta'$ of 
$\mathbf{K}\mathcal{H}_{\vec{\Gamma}}$ by binary relations on $\Gamma_0$.
As matrices appearing in the image of $\Theta$ are $(0,1)$-matrices,
the representation $\Theta'$ is faithful if and only if $\Theta$ is.

\subsection{Regular actions of $\mathcal{C}_{n+1}$}\label{s3.4}

The semigroup $\mathcal{C}_{n+1}$ (which is isomorphic to the semigroup
$\mathbf{K}\mathcal{H}_{\vec{\Gamma}}$ in the case $\vec{\Gamma}$ is of the form
\eqref{eq2}) admits natural regular actions on some classical sets
of cardinality $C_{n+1}$. For example, consider the set $M_1$
consisting of all sequences $1\leq x_1\leq x_2\leq\dots\leq x_{n+1}$
of integers such that $x_i\leq i$ for all $i$ (see \cite[6.19(s)]{St1}). 
For $j=1,\dots,n$ define the action of $T_i$ (see \eqref{eq31}) on 
such a sequence as follows:
\begin{displaymath}
T_i(x_1,x_2,\dots,x_{n+1})=(x_1,\dots,x_{i-1},x_i,x_i,x_{i+2},\dots,x_{n+1}).
\end{displaymath}
It is easy to check that this indeed defines an action of $\mathcal{C}_{n+1}$
on $M_1$ by total transformations and that this action
is equivalent to the regular action of $\mathcal{C}_{n+1}$.

As another example consider the set $M_2$ of sequences of $1$'s and
$-1$'s, each appearing $n+1$ times, such that every partial sum is nonnegative 
(see \cite[6.19(r)]{St1}). For $j=1,\dots,n$ define the action of $T_i$ 
on such a sequence as follows: $T_i$ moves the $i+1$-st
occurrence of $1$ to the left and places it right after the $i$-th occurrence,
for example,
\begin{displaymath}
T_3(11-1--\mathbf{1}1--)=11-1\mathbf{1}--1--
\end{displaymath}
(here $-1$ is denoted simply by $-$ and the element which is moved is 
given in bold). It is easy to check that this 
indeed defines an action of $\mathcal{C}_{n+1}$
on $M_2$ by total transformations and that this action
is equivalent to the regular action of $\mathcal{C}_{n+1}$.

\subsection{Projective and simple linear representations}\label{s3.5}

As $\mathbf{K}\mathcal{H}_{\vec{\Gamma}}$ is a finite $\mathcal{J}$-trivial monoid,
the classical representation theory of finite semigroups
(see e.g. \cite{GMS} or \cite[Chapter~11]{GM}) applies in a straightforward 
way. Thus, from statement \eqref{thm1.3} it follows 
that $\mathbf{K}\mathcal{H}_{\vec{\Gamma}}$ has exactly $2^n$ (isomorphism classes 
of) simple modules over any field $\Bbbk$. These are constructed as 
follows: for $X\subset \Gamma_0$ the corresponding simple module 
$L_X=\Bbbk$ and for $i\in\Gamma_0$ the element $\varepsilon_i$ acts  
on $L_X$ as the identity if $i\in X$ and as zero otherwise.

The indecomposable projective cover $P_X$ of $L_X$ is combinatorial
in the sense that it is the linear span of the set 
\begin{displaymath}
\mathtt{P}_X:=\{\beta\in \mathbf{K}\mathcal{H}_{\vec{\Gamma}}:\text{ for all }
i\in\Gamma_0\text{ the equality }
\beta\varepsilon_i=\beta\text{ implies }i\in X\}
\end{displaymath}
with the action of $\mathbf{K}\mathcal{H}_{\vec{\Gamma}}$ given, 
for $\alpha\in \mathbf{K}\mathcal{H}_{\vec{\Gamma}}$ and $\beta\in \mathtt{P}_X$, by 
\begin{displaymath}
\alpha\cdot \beta=
\begin{cases}
\alpha\beta, & \alpha\beta\in \mathtt{P}_X;\\
0,&\text{ otherwise}.
\end{cases} 
\end{displaymath}

\begin{remark}\label{rem31}
{\rm  
Both Theorem~\ref{thm1}\eqref{thm1.1}-\eqref{thm1.5} 
and Subsections~\ref{s3.2}, \ref{s3.3} and \ref{s3.5} generalize mutatis
mutandis to the case of an arbitrary forest $\Gamma$ 
(the corresponding Coxeter group $W_{\Gamma}$ is infinite in general).
To prove Theorem~\ref{thm1}\eqref{thm1.1} in the general case
one should rather consider
$\mathbf{K}\mathcal{H}_{\vec{\Gamma}}$ as a quotient of $\mathbf{K}_n$
(via the epimorphism $\psi$ from Subsection~\ref{s2.4}).
}
\end{remark}

\section{Catalan numbers via enumeration of special words}\label{s4}

The above results suggest the following interpretation for
short-braid avoiding permutations.
For $n\in\mathbb{N}$ consider the alphabet $\{a_1,a_2,\dots,a_n\}$ 
and the set $\mathfrak{W}_n$ of all finite words in this alphabet.
Let $\sim$ denote the minimal equivalence relation on $\mathfrak{W}_n$
such that for any $i,j\in\{1,2,\dots,n\}$ satisfying $|i-j|>1$ and 
any $v,w\in \mathfrak{W}_n$ we have $va_ia_jw\sim va_ja_iw$. 

A word $v\in \mathfrak{W}_n$ will be called {\em strongly special}
if the following condition is satisfied: whenever
$v=v_1a_iv_2a_iv_3$ for some $i$, the word $v_2$ contains both
$a_{i+1}$ and $a_{i-1}$. In particular, both $a_1$ and $a_n$
occur at most once in any strongly special word. It is easy to
check that the equivalence class of a strongly special word consists
of strongly special words.

\begin{proposition}\label{prop32}
The number of equivalence classes of strongly special words in 
$\mathfrak{W}_n$ equals $C_{n+1}$.
\end{proposition}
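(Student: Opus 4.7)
Let $\vec{\Gamma}$ be the orientation in \eqref{eq2}. By Theorem~\ref{thm1}\eqref{thm1.7}, $|\mathbf{K}\mathcal{H}_{\vec{\Gamma}}|=C_{n+1}$, so it suffices to biject $\sim$-classes of strongly special words with elements of $\mathbf{K}\mathcal{H}_{\vec{\Gamma}}$. Since every relation generating $\sim$ is also a defining relation of $\mathbf{K}\mathcal{H}_{\vec{\Gamma}}$, the canonical epimorphism $\varphi:\mathfrak{W}_n\tto\mathbf{K}\mathcal{H}_{\vec{\Gamma}}$ descends to a well-defined map $\bar\varphi:\mathfrak{W}_n/{\sim}\to\mathbf{K}\mathcal{H}_{\vec{\Gamma}}$. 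The plan is to show that the restriction of $\bar\varphi$ to $\sim$-classes of strongly special words is a bijection; the proposition then follows.

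For surjectivity, let $\alpha\in\mathbf{K}\mathcal{H}_{\vec{\Gamma}}$ and pick a preimage $w\in\mathfrak{W}_n$ of minimum length. If $w$ is not strongly special, then $w=v_1a_iv_2a_iv_3$ for two consecutive occurrences of $a_i$ (so $v_2$ has no $a_i$) and, by symmetry, we may assume $v_2$ also contains no $a_{i+1}$. Every $a_\ell$ in $v_2$ with $\ell>i+1$ commutes with $a_i$ and with every $a_{\ell'}$ having $\ell'\le i-1$; hence using $\sim$-moves we may write $v_2\sim w_1w_2$ with $w_1$ built only from letters indexed $>i+1$ and $w_2$ built only from letters indexed $\le i-1$. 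The key lemma asserts
\begin{equation*}
\varepsilon_i\cdot\varphi(w_2)\cdot\varepsilon_i=\varepsilon_i\cdot\varphi(w_2)\quad\text{in }\mathbf{K}\mathcal{H}_{\vec{\Gamma}},
\end{equation*}
and is proved by induction on $|w_2|$: in the step case $w_2=a_{i-1}Y$, multiplying the inductive identity $\varepsilon_i\varphi(Y)\varepsilon_i=\varepsilon_i\varphi(Y)$ on the left by $\varepsilon_i\varepsilon_{i-1}$ and simplifying via the orientation-specific relation $\varepsilon_i\varepsilon_{i-1}\varepsilon_i=\varepsilon_i\varepsilon_{i-1}$ yields the required identity, while the case $w_2=a_\ell Y$ with $\ell<i-1$ follows by commuting $\varepsilon_\ell$ past $\varepsilon_i$. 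Since $\varphi(w_1)$ commutes with $\varepsilon_i$, the lemma gives $\alpha=\varphi(v_1w_1a_iw_2v_3)$, a preimage of length $|w|-1$, contradicting minimality. The symmetric case in which $v_2$ omits $a_{i-1}$ uses the analogue $\varepsilon_i\varphi(w_2)\varepsilon_i=\varphi(w_2)\varepsilon_i$ for $w_2$ built from letters indexed $\ge i+1$, coming from $\varepsilon_i\varepsilon_{i+1}\varepsilon_i=\varepsilon_{i+1}\varepsilon_i$.

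For injectivity, suppose strongly special words $w,w'$ satisfy $\varphi(w)=\varphi(w')$. The strongly special property is preserved under $\sim$ and directly forbids the subwords $a_ia_i$ and $a_ia_{i\pm 1}a_i$; consequently neither the relation $s_i^2=e$ nor the short-braid relation of $W_{\Gamma}=S_{n+1}$ applies to any $\sim$-representative of $w$, so $w$ is a reduced expression of $\pi_0(w)$ and $\pi_0(w)$ is short-braid avoiding (equivalently, $321$-avoiding); the same holds for $w'$. By the bijection established in Subsection~\ref{s2.6} between short-braid avoiding permutations of $S_{n+1}$ and elements of $\mathbf{K}\mathcal{H}_{\vec{\Gamma}}$ (which is a bijection in the present linear case by Theorem~\ref{thm1}\eqref{thm1.7}), the equality $\varphi(w)=\varphi(w')$ forces $\pi_0(w)=\pi_0(w')$. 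Since $\pi_0(w)$ is fully commutative, all of its reduced expressions are connected by commutation moves, so $w\sim w'$.

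The principal technical step is the key lemma in the surjectivity argument; its proof is a short induction but requires careful use of the orientation-specific relation together with commutations. Once the lemma is in place, surjectivity follows immediately, and injectivity reduces to the standard characterization of $321$-avoiding (fully commutative) permutations as those for which all reduced expressions form a single commutation class.
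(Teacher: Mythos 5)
Your proof is correct, but it takes a genuinely different route from the paper's in its main technical half. The paper argues entirely inside $S_{n+1}$: it identifies $\sim$-classes of strongly special words with commutation classes of reduced expressions of short-braid avoiding permutations, the key step being a minimal-counterexample argument showing that every reduced expression of a short-braid avoiding permutation is strongly special (if $v=a_iwa_i$ with $w$ missing $a_{i+1}$, then $w$ contains $a_{i-1}$ exactly once, and commuting the remaining letters past $s_i$ exposes a factor $s_is_{i-1}s_i$); the count then comes from the enumeration of $321$-avoiding permutations already used in Subsection~\ref{s2.6}. You instead biject the classes with elements of $\mathbf{K}\mathcal{H}_{\vec{\Gamma}}$ for the orientation \eqref{eq2} and quote Theorem~\ref{thm1}\eqref{thm1.7} for the count. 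Your surjectivity argument is the genuinely new ingredient: a length-reducing rewriting procedure inside the semigroup, whose key lemma $\varepsilon_i\varphi(w_2)\varepsilon_i=\varepsilon_i\varphi(w_2)$ for $w_2$ supported on indices at most $i-1$ is a correct induction exploiting the orientation-specific relation $\varepsilon_i\varepsilon_{i-1}\varepsilon_i=\varepsilon_i\varepsilon_{i-1}$. Your injectivity argument essentially reproduces the paper's correspondence with short-braid avoiding permutations, but is more explicit about the two standard Coxeter facts the paper leaves implicit (a word whose commutation class is closed under braid moves and avoids $s_is_i$ is reduced, by the word property; and all reduced expressions of a fully commutative element form a single commutation class). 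What your approach buys is a constructive normal-form procedure for $\mathbf{K}\mathcal{H}_{\vec{\Gamma}}$ and a surjectivity half that is pure semigroup computation; what the paper's buys is brevity and a statement phrased purely in terms of words and permutations. One presentational quibble: the phrase ``by symmetry'' for the two failure modes ($v_2$ missing $a_{i+1}$ versus $a_{i-1}$) is misleading, since the orientation breaks that symmetry; you do in fact treat both cases separately with the correct orientation-specific relations, so nothing is lost, but the wording should be adjusted.
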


\begin{proof}
We show that equivalence classes of strongly special words correspond
exactly to short-braid avoiding permutations in $S_{n+1}$.
After that the proof is completed by applying 
arguments from Subsection~\ref{s2.6}.

If $v=a_{i_1}a_{i_2}\dots a_{i_k}$ is a strongly special word, then the
corresponding permutation $s_{i_1}s_{i_2}\dots s_{i_k}\in S_{n+1}$
is obviously short-braid avoiding. 

On the other hand, any reduced expression of a short-braid avoiding
permutation corresponds to a strongly special word. Indeed, assume that 
this is not the case. Let $s_{i_1}s_{i_2}\dots s_{i_k}\in S_{n+1}$
be a reduced expression for a short-braid avoiding element 
and assume that the corresponding word
$v=a_{i_1}a_{i_2}\dots a_{i_k}$ is not strongly special. Then we may 
assume that $k$ is minimal possible, which yields that we can write
$v=a_iwa_i$ such that $w$ contains neither $a_i$ nor one of 
the elements $a_{i\pm 1}$. Without loss of generality we may assume 
that $w$ does not contain $a_{i+1}$. 

First we observe that $w$ must contain $a_{i-1}$, for otherwise 
$s_i$ would commute with all other appearing simple reflections
and hence, using $s_i^2=e$ we would obtain that our expression above
is not reduced, a contradiction. Further, we claim that $a_{i-1}$
occurs in $w$ exactly once, for $w$ does not contain $a_i$
and hence any two occurrences of $a_{i-1}$ would bound a 
proper subword of $v$ that is not strongly special, contradicting the 
minimality of $k$. 

Since $s_i$ commutes with all simple reflections appearing in 
our product but $s_{i-1}$, which, in turn, appears only once, we 
can compute that $s_ias_{i-1}bs_i=as_is_{i-1}s_ib$, which contradicts 
our assumption of short-braid avoidance.
The claim of the proposition follows.
\end{proof}

This interpretation is closely connected with $\mathbf{K}_n$.
A word $v\in \mathfrak{W}_n$ is called {\em special}
provided that the following condition is satisfied: whenever
$v=v_1a_iv_2a_iv_3$ for some $i$, then $v_2$ contains both
some $a_{j}$ with $j>i$ and some $a_{j}$ with $j<i$. In particular,
every strongly special word is special. The number of special words 
equals the cardinality of $\mathbf{K}_n$ (see \cite{KM}). 
So far there is no formula for this number.

\section{Hecke-Kiselman semigroups}\label{s5}

\subsection{Definitions}\label{s5.1}

Kiselman quotients of $0$-Hecke monoids suggest the following 
general construction. For simplicity, for every
$n\in\mathbb{N}$ we fix the set $\mathtt{N}_n:=\{1,2,\dots,n\}$
with $n$ elements. Let $\mathcal{M}_n$ denote the set of all 
simple digraphs on $\mathtt{N}_n$.
For $\Theta\in \mathcal{M}_n$ define the corresponding 
{\em Hecke-Kiselman semigroup} $\mathbf{HK}_{\Theta}$ 
(or an {\em $\mathbf{HK}$-semigroup} for short) as follows:
$\mathbf{HK}_{\Theta}$ is the monoid generated by idempotents
$e_i$, $i\in \mathtt{N}_n$, subject to the following relations
(for any $i,j\in \mathtt{N}_n$, $i\neq j$):
\begin{equation}\label{relations}
\begin{array}{|ccccc||c|}
\hline
&&\text{Relations}&&&\text{Edge between $i$ and $j$}\\
\hline\hline
e_ie_j&=&e_je_i&&&\xymatrix{i&j}\\\hline
e_ie_je_i&=&e_je_ie_j&&&\xymatrix{i\ar@/^/@{->}[r]&j\ar@/^/@{->}[l]}\\\hline
e_ie_je_i&=&e_je_ie_j&=&e_ie_j&\xymatrix{i\ar@{->}[r]&j}\\\hline
e_ie_je_i&=&e_je_ie_j&=&e_je_i&\xymatrix{i&j\ar@{->}[l]}\\
\hline
\end{array}
\end{equation}
The elements $e_1,e_2,\dots,e_n$ will be called the {\em canonical 
generators} of $\mathbf{HK}_{\Theta}$. 

\begin{example}\label{exmp1}
{\rm
\begin{enumerate}[(a)]
\item\label{ex1} If $\Theta$ has no edges, the
semigroup $\mathbf{HK}_{\Theta}$ is a {\em commutative band} isomorphic
to the semigroup $(2^{\mathtt{N}_n},\cup)$ via the map 
$e_i\mapsto \{i\}$.
\item\label{ex2} Let $\Theta\in {\mathcal{M}}_n$ 
be such that for every $i,j\in\mathtt{N}_n$, 
$i>j$, the graph $\Theta$ contains the edge $\xymatrix{i\ar@{->}[r]&j}$. 
Then the semigroup $\mathbf{HK}_{\Theta}$ coincides with the 
{\em Kiselman semigroup} $\mathrm{K}_n$ as defined in \cite{KM}. This
semigroup appeared first in \cite{Go} and was also studied in \cite{Al}.
\item\label{ex3} Let $\Gamma$ be a simply laced Dynkin diagram.
Interpret every edge of $\Gamma$ as a pair of oriented edges in
different directions and let $\Theta$ denote the corresponding simple
digraph. Then $\mathbf{HK}_{\Theta}$ is isomorphic to the $0$-Hecke monoid 
$\mathcal{H}_{\Gamma}$ as defined in Section~\ref{s1}.
\item\label{ex4}  Let $\Gamma$ be an oriented simply laced Dynkin diagram
and $\Theta$ the corresponding mixed graph.
Then $\mathbf{HK}_{\Theta}$ is isomorphic to the Kiselman
quotient $\mathbf{K}\mathcal{H}_{\Gamma}$ of the $0$-Hecke monoid 
as defined in Section~\ref{s1}.
\end{enumerate}
}
\end{example}

For $\Theta\in\mathcal{M}_n$ define the {\em opposite graph}
$\Theta^{\mathrm{op}}\in\mathcal{M}_n$ as the graph obtained from 
$\Theta\in\mathcal{M}_n$ by reversing the directions of all oriented
arrows.

\begin{proposition}\label{prop7}
For any $\Theta\in\mathcal{M}_n$, mapping $e_i$ to $e_i$ extends
uniquely to an isomorphism from $\mathbf{HK}^{\mathrm{op}}_{\Theta}$
to $\mathbf{HK}_{\Theta^{\mathrm{op}}}$. 
\end{proposition}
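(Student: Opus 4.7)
The plan is to use the universal property of $\mathbf{HK}_{\Theta^{\mathrm{op}}}$: since it is presented by the idempotent generators $e_1,\dots,e_n$ subject to the relations in (\ref{relations}) dictated by $\Theta^{\mathrm{op}}$, a map $e_i\mapsto e_i$ into any monoid $M$ extends (uniquely) to a homomorphism precisely when the chosen images in $M$ satisfy those relations. We take $M=\mathbf{HK}^{\mathrm{op}}_\Theta$ and its canonical generators $e_i$; because $e_i$'s generate $\mathbf{HK}^{\mathrm{op}}_\Theta$, the uniqueness part is automatic.

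The main observation driving the verification is that a word identity $w=w'$ holds in $\mathbf{HK}^{\mathrm{op}}_\Theta$ if and only if the word identity $w^R=w'^R$ obtained by reversing the order of letters holds in $\mathbf{HK}_\Theta$. So I would go through the four rows of (\ref{relations}) applied to $\Theta^{\mathrm{op}}$ and check, in each case, that the reversed identity is one of the defining relations of $\mathbf{HK}_\Theta$:
\begin{itemize}
\item no edge between $i$ and $j$ (in $\Theta^{\mathrm{op}}$, hence in $\Theta$): the identity $e_ie_j=e_je_i$ reverses to itself;
\item unoriented edge (invariant under op): $e_ie_je_i=e_je_ie_j$ reverses to itself;
\item oriented edge $i\to j$ in $\Theta^{\mathrm{op}}$, i.e.\ $j\to i$ in $\Theta$: the identity $e_ie_je_i=e_je_ie_j=e_ie_j$ reverses to $e_ie_je_i=e_je_ie_j=e_je_i$, which is exactly the relation associated with $j\to i$ in $\Theta$;
\item the remaining oriented edge is handled symmetrically.
\end{itemize}
This produces the required homomorphism $\phi\colon \mathbf{HK}^{\mathrm{op}}_\Theta\to\mathbf{HK}_{\Theta^{\mathrm{op}}}$ with $\phi(e_i)=e_i$.

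For bijectivity, I would invoke the same construction with $\Theta$ replaced by $\Theta^{\mathrm{op}}$ (noting $(\Theta^{\mathrm{op}})^{\mathrm{op}}=\Theta$) to obtain a homomorphism $\psi\colon \mathbf{HK}^{\mathrm{op}}_{\Theta^{\mathrm{op}}}\to\mathbf{HK}_\Theta$, and then pass to opposites to produce $\psi^{\mathrm{op}}\colon \mathbf{HK}_{\Theta^{\mathrm{op}}}\to\mathbf{HK}^{\mathrm{op}}_\Theta$, again sending $e_i$ to $e_i$. The two composites $\phi\circ\psi^{\mathrm{op}}$ and $\psi^{\mathrm{op}}\circ\phi$ fix the canonical generators, so they are identity maps on the respective semigroups (which are generated by those $e_i$'s), and $\phi$ is the claimed isomorphism.

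There is no real obstacle here; the whole argument reduces to tracking the word-reversal correspondence between $\mathbf{HK}_\Theta$ and $\mathbf{HK}^{\mathrm{op}}_\Theta$ and the correspondence between oriented edges in $\Theta$ and in $\Theta^{\mathrm{op}}$. The only point requiring a bit of care is matching, in the oriented case, the tail/head convention encoded by the last equality in the relation $e_ie_je_i=e_je_ie_j=e_ie_j$ with the way the orientation flips under $(\cdot)^{\mathrm{op}}$.
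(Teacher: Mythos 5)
Your proof is correct and follows essentially the same route as the paper, which simply notes that reversing the words in the relations while reversing the orientation of the arrows swaps the last two lines of the relation table \eqref{relations}; you have merely spelled out this observation row by row and made the inverse-map argument explicit.
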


\begin{proof}
This follows from \eqref{relations} and the easy observation that the
two last lines of \eqref{relations} are swapped by changing the
orientation of the arrows and reading all words in the relations 
from the right to the left.
\end{proof}

\subsection{Canonical maps}\label{s5.2}

\begin{proposition}\label{prop6}
Let $\Theta,\Phi\in\mathcal{M}_n$ and assume that $\Phi$
is obtained from $\Theta$ by deleting some edges.
Then mapping $e_i$ to $e_i$ extends uni\-qu\-ely
to an epimorphism from $\mathbf{HK}_{\Theta}$ to $\mathbf{HK}_{\Phi}$.
\end{proposition}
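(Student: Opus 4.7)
The plan is to invoke the universal property of the presentation of $\mathbf{HK}_{\Theta}$: it suffices to check that the canonical generators $e_1,\dots,e_n$ of $\mathbf{HK}_{\Phi}$ satisfy all of the defining relations of $\mathbf{HK}_{\Theta}$ listed in \eqref{relations}. Once this is established, the assignment $e_i\mapsto e_i$ extends uniquely to a monoid homomorphism $\mathbf{HK}_{\Theta}\to\mathbf{HK}_{\Phi}$; uniqueness is automatic because the $e_i$ generate $\mathbf{HK}_{\Theta}$, and surjectivity is automatic because their images still generate $\mathbf{HK}_{\Phi}$. So the whole proposition reduces to a purely relational verification.

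For the verification I would fix a pair $i\neq j$ and consider the edge between $i$ and $j$ in $\Theta$, which determines one of the four rows of \eqref{relations}; call the corresponding relation $R_{\Theta}(i,j)$. Since $\Phi$ arises from $\Theta$ by deleting edges, the edge-status between $i$ and $j$ in $\Phi$ sits ``below'' that of $\Theta$ in the obvious partial order where \emph{two arrows} $>$ \emph{single arrow} $>$ \emph{no edge}. I would then walk through the resulting (small) list of compatible pairs (edge in $\Theta$, edge in $\Phi$) and in each case check that $R_{\Phi}(i,j)$ implies $R_{\Theta}(i,j)$. The diagonal pairs are trivial; the off-diagonal ones are all one-line computations using only $e_i^2=e_i$ and $e_j^2=e_j$. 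For instance, if $\Theta$ contains the arrow $i\to j$ while $i,j$ are unrelated in $\Phi$, then from the commutation $e_ie_j=e_je_i$ and idempotency we obtain
\begin{displaymath}
e_ie_je_i=e_je_i^2=e_je_i=e_ie_j=e_i^2e_j=e_je_ie_j,
\end{displaymath}
which gives $R_{\Theta}(i,j)$ in full; the remaining cases are of the same nature.

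I do not anticipate any serious obstacle: the argument is a short finite case analysis, driven by the observation that every relation in \eqref{relations} is a logical consequence of each relation situated strictly above it in the edge-deletion order. The only discipline required is to enumerate the compatible pairs (edge in $\Theta$, edge in $\Phi$) and dispose of each one, which in practice collapses under the symmetry $i\leftrightarrow j$ (and Proposition~\ref{prop7}) to just a handful of one-line checks of the type displayed above.
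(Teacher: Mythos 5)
Your proposal is correct and follows essentially the same route as the paper: reduce to checking that the defining relations of $\mathbf{HK}_{\Theta}$ hold for the generators of $\mathbf{HK}_{\Phi}$, with the only nontrivial case being that two commuting idempotents automatically satisfy $xyx=yxy=xy=yx$ --- exactly the one-line computation the paper displays. Your version merely makes the (trivial) case enumeration over compatible edge-status pairs more explicit; the mathematical content is identical.
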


\begin{proof}
Note that for two arbitrary idempotents $x$ and $y$ of any semigroup
the commutativity $xy=yx$ implies the braid relation
\begin{displaymath}
xyx=x(yx)=x(xy)=(xx)y=xy=x(yy)=(xy)y=(yx)y=yxy. 
\end{displaymath}
Therefore, by \eqref{relations},
in the situation as described above all relations satisfied by  
canonical generators of $\mathbf{HK}_{\Theta}$ are also satisfied by 
the corresponding canonical generators of $\mathbf{HK}_{\Phi}$. This 
implies that mapping $e_i$ to $e_i$ extends uniquely to an homomorphism 
from $\mathbf{HK}_{\Theta}$ to $\mathbf{HK}_{\Phi}$. This homomorphism
is surjective as its image contains all generators of 
$\mathbf{HK}_{\Phi}$.
\end{proof}

We call the epimorphism constructed in Proposition~\ref{prop6} 
the {\em canonical projection} and denote it by 
$\mathfrak{p}_{\Theta,\Phi}$. 

For $\Theta$ and $\Phi$ as above we will write $\Theta\geq\Phi$. Then
$\geq$ is a partial order on $\mathcal{M}_n$ and it defines on 
$\mathcal{M}_n$ the structure of a distributive lattice. 
The maximum element of ${\mathcal{M}}_n$ is the full unoriented 
graph on $\mathtt{N}_n$, which we denote by $\mathbf{max}$. The minimum 
element of ${\mathcal{M}}_n$ is the empty 
graph (the graph with no edges), which we denote by $\mathbf{min}$. 
By Example~\ref{exmp1}\eqref{ex1}, the semigroup $\mathbf{HK}_{\mathbf{min}}$
is a commutative band isomorphic to $(2^{\mathtt{N}_n},\cup)$.
Further, for any $\Theta\in{\mathcal{M}}_n$ 
we have the canonical projections $\mathfrak{p}_{\mathbf{max},\Theta}:
\mathbf{HK}_{\mathbf{max}}\tto \mathbf{HK}_{\Theta}$ and
$\mathfrak{p}_{\Theta,\mathbf{min}}:
\mathbf{HK}_{\Theta}\tto \mathbf{HK}_{\mathbf{min}}$.

For $w\in \mathbf{HK}_{\Theta}$ we define the {\em content} of
$w$ as $\mathfrak{c}(w):=\mathfrak{p}_{\Theta,\mathbf{min}}(w)$. 
This should be understood as the set of canonical generators 
of $\mathbf{HK}_{\Theta}$ appearing in any decomposition of $w$ 
into a product of canonical generators. Under the identification of
$\mathbf{HK}_{\mathbf{min}}$ and $(2^{\mathtt{N}_n},\cup)$, by
$|\mathfrak{c}(w)|$ we understand the number of generators used
to obtain $w$. In particular, $|\mathfrak{c}(e)|=0$ and
$|\mathfrak{c}(e_i)|=1$ for all $i$.

Let $m,n\in\mathbb{N}$, $\Theta\in\mathcal{M}_m$ and
$\Phi\in\mathcal{M}_n$. Assume that $f:\Theta\to \Phi$ is
a full embedding of graphs, meaning that it is an injection on
vertexes and edges and its image in $\Phi$ is a full subgraph
of $\Phi$. 

\begin{proposition}\label{prop8}
In the situation above mapping $e_i$ to $e_{f(i)}$ induces a
monomorphism from $\mathbf{HK}_{\Theta}$
to $\mathbf{HK}_{\Phi}$.
\end{proposition}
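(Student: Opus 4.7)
The plan is to exhibit an explicit left inverse. First, I would verify that the assignment $e_i \mapsto e_{f(i)}$ extends to a well-defined homomorphism $\iota : \mathbf{HK}_{\Theta} \to \mathbf{HK}_{\Phi}$. Because $f$ is a full embedding, for every pair $i,j \in \mathtt{N}_m$ the edge situation between $f(i)$ and $f(j)$ in $\Phi$ coincides with that between $i$ and $j$ in $\Theta$, so each defining relation of $\mathbf{HK}_{\Theta}$ among the generators $e_i$ translates into the corresponding defining relation of $\mathbf{HK}_{\Phi}$ among the $e_{f(i)}$. This part is routine.

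The core of the argument is to construct a retraction $\pi : \mathbf{HK}_{\Phi} \to \mathbf{HK}_{\Theta}$ satisfying $\pi \circ \iota = \mathrm{id}$. I would define $\pi$ on generators by $\pi(e_{f(i)}) = e_i$ for $i \in \mathtt{N}_m$ and $\pi(e_j) = 1$ for $j \in \mathtt{N}_n \setminus f(\mathtt{N}_m)$, where $1$ denotes the identity element of the monoid $\mathbf{HK}_{\Theta}$. To check that $\pi$ extends to a homomorphism, it suffices to verify that for every pair of indices $j,k \in \mathtt{N}_n$ the relation of \eqref{relations} dictated by the edge between $j$ and $k$ in $\Phi$ is respected after applying $\pi$.

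The verification splits into three cases by how many of $j,k$ lie in $f(\mathtt{N}_m)$. If both lie in the image, the relation becomes a defining relation of $\mathbf{HK}_{\Theta}$ among the $e_i$'s (here I use fullness of $f$ to guarantee the edge types match). If neither lies in the image, every term is sent to $1$ and the relation is trivially preserved. The only substantive case is when exactly one of $j,k$ is in the image, say $j = f(i)$; here every relation reduces to an identity among powers of the single idempotent $e_i$. For instance, an undirected relation $e_j e_k e_j = e_k e_j e_k$ becomes $e_i \cdot 1 \cdot e_i = 1 \cdot e_i \cdot 1$, which holds by $e_i^2 = e_i$, and a directed relation $e_j e_k e_j = e_k e_j e_k = e_j e_k$ (or its mirror) collapses to the tautology $e_i = e_i$.

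Once $\pi$ is shown to be a homomorphism, the identity $\pi \circ \iota = \mathrm{id}_{\mathbf{HK}_{\Theta}}$ is immediate on the generators $e_i$ and hence everywhere, so $\iota$ is injective. The main (mild) obstacle is the case analysis verifying the relations for $\pi$, but every case is resolved either by fullness of $f$ or by idempotency of the canonical generators, so no genuine difficulty arises.
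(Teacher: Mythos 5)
Your proposal is correct and follows essentially the same route as the paper: both verify the homomorphism on generators using fullness of $f$ and then construct the retraction sending $e_{f(i)}\mapsto e_i$ and all other canonical generators to the identity, concluding injectivity from $\pi\circ\iota=\mathrm{id}$. Your case analysis for why $\pi$ respects the relations is simply a more explicit version of what the paper calls ``straightforward to verify.''
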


\begin{proof}
From \eqref{relations} and our assumptions on $f$ it follows 
that $e_{f(i)}$'s satisfy all the corresponding defining relations 
satisfied by $e_i$'s. This implies that mapping $e_i$ to $e_{f(i)}$ 
induces a homomorphism $\varphi$ from $\mathbf{HK}_{\Theta}$
to $\mathbf{HK}_{\Phi}$. 

To prove that this homomorphism is injective it is enough to
construct a left inverse. Similarly to the previous paragraph,
from \eqref{relations} and our assumptions on  $f$ it follows that 
mapping $e_{f(i)}$ to $e_i$ and all other canonical generators of
$\mathbf{HK}_{\Phi}$ to $e$ induces a homomorphism $\psi$ from
$\mathbf{HK}_{\Phi}$ to $\mathbf{HK}_{\Theta}$. 
It is straightforward to verify that $\psi\circ\varphi$ acts
as the identity on all generators of $\mathbf{HK}_{\Theta}$.
Therefore $\psi\circ\varphi$ coincides with the identity.
The injectivity of $\varphi$ follows.
\end{proof}

We call the monomorphism constructed in Proposition~\ref{prop8} 
the {\em canonical injection} and denote it by 
$\mathfrak{i}_{f}$. 

\subsection{Classification up to isomorphism}\label{s5.3}

The main result of this subsection is the following classification of
Hecke-Kiselman semigroups up to isomorphism in terms of the underlying 
mixed graphs.

\begin{theorem}\label{thm4}
Let $m,n\in\mathbb{N}$, $\Theta\in\mathcal{M}_m$ and
$\Phi\in\mathcal{M}_n$. Then the semigroups $\mathbf{HK}_{\Theta}$
and $\mathbf{HK}_{\Phi}$ are isomorphic if and only if the graphs
$\Theta$ and $\Phi$ are isomorphic. In particular,
if $\mathbf{HK}_{\Theta}$ and $\mathbf{HK}_{\Phi}$ 
are isomorphic, then $m=n$.
\end{theorem}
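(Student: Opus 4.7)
The \emph{if} direction is immediate: a graph isomorphism $f:\Theta\to\Phi$ and its inverse are both full embeddings, so Proposition~\ref{prop8} produces monomorphisms $\mathfrak{i}_f:\mathbf{HK}_\Theta\to\mathbf{HK}_\Phi$ and $\mathfrak{i}_{f^{-1}}:\mathbf{HK}_\Phi\to\mathbf{HK}_\Theta$ whose compositions fix all canonical generators and therefore equal the respective identities. For the \emph{only if} direction, the plan is to reconstruct the mixed graph $\Theta$ intrinsically from the semigroup $\mathbf{HK}_\Theta$, so that any isomorphism $\varphi:\mathbf{HK}_\Theta\to\mathbf{HK}_\Phi$ must arise from a graph isomorphism.

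First I would recover the integer $n$. The claim is that $\mathbf{HK}_{\mathbf{min}}$ is the maximal commutative band quotient of $\mathbf{HK}_\Theta$: given any homomorphism into a commutative band $B$, the canonical generators $e_i$ map to commuting idempotents, so the defining relations of $\mathbf{HK}_{\mathbf{min}}$ hold in the image, and the homomorphism factors through $\mathfrak{p}_{\Theta,\mathbf{min}}$. Since $\mathbf{HK}_{\mathbf{min}}\cong(2^{\mathtt{N}_n},\cup)$, the number $n$ equals the number of $\cup$-irreducible (atomic) elements of this universal quotient and is therefore an invariant of the semigroup.

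Next I would recover the canonical generating set $\mathbf{E}$. Writing $q=\mathfrak{p}_{\Theta,\mathbf{min}}$, the content function on the free monoid descends to $q$ because every relation in \eqref{relations} preserves the set of generators used in a word; so any $x\in\mathbf{HK}_\Theta$ with $q(x)=\{i\}$ is represented by a word in $e_i$ alone, yielding $x=e_i^k=e_i$ for some $k\ge 1$. Hence $q^{-1}(\{i\})=\{e_i\}$, which gives an intrinsic characterization of $\mathbf{E}$ as the preimage of the atoms under the universal projection. Any isomorphism $\varphi$ preserves this characterization and therefore induces a bijection $\sigma$ between the canonical generators of $\mathbf{HK}_\Theta$ and those of $\mathbf{HK}_\Phi$; in particular $m=n$.

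Finally I would recover the edges and their orientations. For any pair $i\neq j$, Proposition~\ref{prop8} applied to the inclusion of the induced subgraph on $\{i,j\}$ shows that the submonoid $\langle e_i,e_j\rangle\subseteq\mathbf{HK}_\Theta$ is isomorphic to $\mathbf{HK}_{\Theta|_{\{i,j\}}}$, and $\varphi$ restricts to an isomorphism of these submonoids sending $e_i\mapsto e_{\sigma(i)}$ and $e_j\mapsto e_{\sigma(j)}$. The four possibilities for a two-vertex graph are distinguished by purely algebraic conditions on this ordered pair of generators, read off from \eqref{relations}: no edge iff $e_ie_j=e_je_i$; unoriented edge iff $e_ie_je_i=e_je_ie_j$ while $e_ie_j\neq e_je_i$; oriented $i\to j$ iff $e_ie_je_i=e_ie_j$; oriented $j\to i$ iff $e_je_ie_j=e_je_i$. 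Since $\varphi$ preserves every such algebraic condition, $\sigma$ carries edges and orientations of $\Theta$ to those of $\Phi$, proving $\Theta\cong\Phi$. The main obstacle is the reconstruction step itself---verifying that $q^{-1}(\{i\})$ is truly a singleton and that $\mathbf{HK}_{\mathbf{min}}$ really is the universal commutative band quotient---which requires care precisely because $\mathbf{HK}_\Theta$ may be infinite (when the unoriented part of $\Theta$ generates an infinite Coxeter group), so cardinality arguments are unavailable and everything must be handled at the level of the content function and the defining relations.
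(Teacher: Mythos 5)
Your proof is correct and follows essentially the same route as the paper: pin down the canonical generators intrinsically, deduce $m=n$, and then read off the edge between $i$ and $j$ from the submonoid $\langle e_i,e_j\rangle$, which Proposition~\ref{prop8} identifies with the Hecke-Kiselman monoid of the induced two-vertex subgraph. Your characterization of the generators as the preimages of the atoms under the universal commutative-band quotient is an equivalent repackaging of the paper's Lemma~\ref{lem5}, which instead observes that $\{e_1,\dots,e_n\}$ is exactly the set of elements of content $1$ and coincides with $\mathrm{Irr}(\mathbf{HK}_{\Theta})$; both rest on the same fact that content is preserved by every defining relation, and both are manifestly isomorphism-invariant. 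One correction to your last step: the four ``iff'' conditions you list are not mutually exclusive as stated. The braid relation $e_ie_je_i=e_je_ie_j$ holds in \emph{all} four cases (with no edge both sides equal $e_ie_j=e_je_i$; with an oriented edge it holds by definition), so ``braid holds and $e_ie_j\neq e_je_i$'' does not single out the unoriented edge, and likewise $e_ie_je_i=e_ie_j$ also holds when $i$ and $j$ are not joined. The correct reading is: no edge iff $e_ie_j=e_je_i$; given an edge, it is oriented $i\to j$ iff $e_ie_je_i=e_ie_j$, oriented $j\to i$ iff $e_ie_je_i=e_je_i$, and unoriented otherwise. One must then verify in the four two-vertex monoids (of cardinalities $4$, $5$, $5$, $6$) that these conditions genuinely separate the cases --- which is exactly what the paper's phrase ``comparing the subsemigroups generated by $e_i$ and $e_j$'' amounts to, and what Proposition~\ref{prop8} makes legitimate inside the possibly infinite ambient monoid.
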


\begin{proof}
Let $f:\Theta\to\Phi$ be an isomorphism of graphs with
inverse $g$. By Proposition~\ref{prop8} we have the corresponding
natural injections $\mathfrak{i}_{f}:\mathbf{HK}_{\Theta}\to
\mathbf{HK}_{\Phi}$ and $\mathfrak{i}_{g}:\mathbf{HK}_{\Phi}\to
\mathbf{HK}_{\Theta}$. By definition, both 
$\mathfrak{i}_{g}\circ\mathfrak{i}_{f}$ and $\mathfrak{i}_{f}\circ\mathfrak{i}_{g}$ act as identities on 
the generators of $\mathbf{HK}_{\Theta}$ and $\mathbf{HK}_{\Phi}$,
respectively. Hence $\mathfrak{i}_{f}$ and $\mathfrak{i}_{g}$ are 
mutually inverse isomorphisms. This proves the ``if'' part of the 
first claim of the theorem.

\begin{lemma}\label{lem5}
We have
\begin{displaymath}
\mathrm{Irr}(\mathbf{HK}_{\Phi})=\{e_1,e_2,\dots,e_n\}=
\{w\in \mathbf{HK}_{\Phi}:|\mathfrak{c}(w)|= 1\}. 
\end{displaymath}
\end{lemma}

\begin{proof}
From the definitions we see that $\mathrm{Irr}(\mathbf{HK}_{\Phi})$
is contained in any generating system for $\mathbf{HK}_{\Phi}$, in particular,
in $\{w\in \mathbf{HK}_{\Phi}:|\mathfrak{c}(w)|= 1\}$.

Since all canonical generators of $\mathbf{HK}_{\Phi}$ are idempotents,
it follows that $\{w\in \mathbf{HK}_{\Phi}:|\mathfrak{c}(w)|\leq 1\}
\subset \{e,e_1,e_2,\dots,e_n\}$.
It is straightforward to verify that $\{e_1,e_2,\dots,e_n\}\subset 
\mathrm{Irr}(\mathbf{HK}_{\Phi})$, which completes the proof.
\end{proof}

Assume that $\varphi:\mathbf{HK}_{\Theta}\to\mathbf{HK}_{\Phi}$
is an isomorphism. Then $\varphi$ induces a bijection from 
$\mathrm{Irr}(\mathbf{HK}_{\Theta})$ to
$\mathrm{Irr}(\mathbf{HK}_{\Phi})$, which implies 
$m=n$ by comparing the cardinalities of these sets
(see Lemma~\ref{lem5}). 
This proves the second claim of the theorem.

Let $e_i$ and $e_j$ be two different canonical generators of 
$\mathbf{HK}_{\Theta}$. By \eqref{relations}, in the case when 
the graph $\Theta$ contains no edge between $i$ and $j$ the 
elements $e_i$ and $e_j$ commute in $\mathbf{HK}_{\Theta}$. 
As $\varphi$ is an isomorphism, we get that $\varphi(e_i)=e_s$ 
and $\varphi(e_j)=e_t$ commute in $\mathbf{HK}_{\Phi}$.
Using \eqref{relations} again we obtain that the graph $\Phi$ 
contains no edge between $s$ and $t$. 

Similarly, comparing the subsemigroup of $\mathbf{HK}_{\Theta}$ generated 
by $e_i$ and $e_j$ with the subsemigroup of $\mathbf{HK}_{\Phi}$ 
generated by $\varphi(e_i)$ and $\varphi(e_j)$ for all other possibilities 
for edges between $i$ and $j$, we obtain that $\varphi$ induces a graph isomorphism from $\Theta$ to $\Phi$. This proves the ``only if'' part 
of the first claim of the theorem and thus completes the proof.
\end{proof}

\begin{corollary}\label{cor10}
For $\Phi\in \mathcal{M}_n$ the set $\{e_1,e_2,\dots,e_n\}$ is the
unique irreducible generating system of $\mathbf{HK}_{\Phi}$.
\end{corollary}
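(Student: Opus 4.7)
The plan is to derive the corollary almost immediately from Lemma~\ref{lem5}, which already identifies $\mathrm{Irr}(\mathbf{HK}_{\Phi})$ with $\{e_1,\dots,e_n\}$, and from the content homomorphism $\mathfrak{p}_{\Phi,\mathbf{min}}:\mathbf{HK}_{\Phi}\tto\mathbf{HK}_{\mathbf{min}}\cong (2^{\mathtt{N}_n},\cup)$ constructed in Subsection~\ref{s5.2}. The overall strategy mirrors what was done for Kiselman quotients in Subsection~\ref{s2.2}: show that $\{e_1,\dots,e_n\}$ is irreducible, and show that any irreducible generating system must contain the set of irreducible elements of the semigroup, which by Lemma~\ref{lem5} is precisely $\{e_1,\dots,e_n\}$.

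First I would verify that $\{e_1,\dots,e_n\}$ is indeed an irreducible generating system. It generates by the very definition of $\mathbf{HK}_{\Phi}$. To see it is irreducible, suppose for contradiction that some $e_i$ lies in the subsemigroup generated by the remaining $e_j$'s. Then $e_i=e_{j_1}e_{j_2}\cdots e_{j_k}$ for some indices $j_\ell\neq i$, and applying the content map gives $\{i\}=\mathfrak{c}(e_i)=\mathfrak{c}(e_{j_1}\cdots e_{j_k})=\{j_1,\dots,j_k\}$, which excludes $i$, a contradiction.

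For uniqueness, let $S$ be any irreducible generating system for $\mathbf{HK}_{\Phi}$. Each $e_i$ is expressible as a product of elements of $S$; by Lemma~\ref{lem5}, $e_i\in\mathrm{Irr}(\mathbf{HK}_{\Phi})$ — that is, $e_i$ cannot be written as a product of two non-identity elements — so any such factorization must consist of a single factor, forcing $e_i\in S$ (the case $e_i=1$ is excluded since $|\mathfrak{c}(e_i)|=1\neq 0=|\mathfrak{c}(1)|$). Thus $\{e_1,\dots,e_n\}\subseteq S$, but $\{e_1,\dots,e_n\}$ already generates, and irreducibility of $S$ then forces $S=\{e_1,\dots,e_n\}$. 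The main (very mild) obstacle is merely to reconcile the two meanings of ``irreducible'' — irreducibility of an element versus of a generating system — but the content homomorphism $\mathfrak{p}_{\Phi,\mathbf{min}}$ handles both issues uniformly, so no further input is needed.
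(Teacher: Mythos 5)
Your proof is correct and follows essentially the same route as the paper: Lemma~\ref{lem5} identifies the irreducible elements with the canonical generators, every generating system must contain all irreducible elements, and the content homomorphism $\mathfrak{p}_{\Phi,\mathbf{min}}$ supplies the irreducibility of $\{e_1,\dots,e_n\}$. One small slip: since each $e_i$ is idempotent, $e_i=e_ie_i$ \emph{is} a product of two non-identity elements, so your parenthetical gloss of irreducibility should instead read that $e_i$ is not a product of elements of $S$ all distinct from $e_i$ --- which your own content argument already delivers, as any factorization $e_i=s_1\cdots s_k$ forces every $s_j\in\{e,e_i\}$.
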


\begin{proof}
That $\{e_1,e_2,\dots,e_n\}$ is an irreducible generating system 
of $\mathbf{HK}_{\Phi}$ follows from the definitions. On the other
hand, that any generating system of $\mathbf{HK}_{\Phi}$ contains
$\{e_1,e_2,\dots,e_n\}$ follows from the proof of Lemma~\ref{lem5}.
This implies the claim.
\end{proof}

From the above it follows that the number of isomorphism classes
of semigroups $\mathbf{HK}_{\Theta}$, $\Theta\in\mathcal{M}_n$,
equals the number of simple digraphs. The latter is known as the
sequence A000273 of the On-Line Encyclopedia of Integer Sequences.

\subsection{Some open problems}\label{s5.4}

Here is a short list of some natural questions on Hecke-Kiselman 
semigroups:
\begin{itemize}
\item For which $\Theta$ is $\mathbf{HK}_{\Theta}$ finite?  
\item For which $\Theta$ is $\mathbf{HK}_{\Theta}$ $\mathcal{J}$-trivial?  
\item For a fixed $\Theta$, what is the smallest $n$ for which 
there is a faithful representation of $\mathbf{HK}_{\Theta}$ by
$n\times n$ matrices (over $\mathbb{Z}$ or $\mathbb{C}$)?  
\item For a fixed $\Theta$, how to construct a faithful representation
of $\mathbf{HK}_{\Theta}$ by (partial) transformations?  
\item What is a canonical form for an element of $\mathbf{HK}_{\Theta}$?
\end{itemize}

\vspace{0.2cm}

\noindent
O.G.: Department of Mechanics and Mathematics, Kyiv Taras Shev\-chen\-ko
University, 64, Volodymyrska st., UA-01033, Kyiv, UKRAINE,\\
e-mail: {\tt ganiyshk\symbol{64}univ.kiev.ua}
\vspace{0.2cm}

\noindent
V.M.: Department of Mathematics, Uppsala University, Box 480,
SE-75106, Uppsala, SWEDEN, e-mail: {\tt mazor\symbol{64}math.uu.se},\\
web: ``http://www.math.uu.se/$\tilde{\hspace{2mm}}$mazor''
\vspace{0.5cm}


\begin{thebibliography}{99999}
\bibitem[Al]{Al} S.~Alsaody, Determining the elements of a semigroup,
U.U.D.M. Report {\bf 2007:3}, Uppsala University.
\bibitem[BJS]{BJS} S.~Billey, W.~Jockusch, R.~Stanley; Some 
combinatorial properties of Schubert polynomials.  J. Algebraic 
Combin. {\bf 2}  (1993),  no. 4, 345--374. 
\bibitem[BB]{BB} A.~Bj{\"o}rner,  F.~Brenti; Combinatorics of Coxeter 
groups. Graduate Texts in Mathematics, {\bf 231}. Springer, New York, 2005.
\bibitem[Ca]{Ca} R.~Carter; Representation theory of the $0$-Hecke 
algebra.  J. Algebra  {\bf 104}  (1986),  no. 1, 89--103.
\bibitem[Do]{Do} J.~Doyen; {\'E}quipotence et unicit{\'e} de 
syst{\`e}mes g{\'e}n{\'e}rateurs minimaux dans certains mono{\"\i}des. 
Semigroup Forum {\bf 28} (1984), no. 1-3, 341--346.  
\bibitem[Fa]{Fa} M.~Fayers; $0$-Hecke algebras of finite Coxeter groups.  
J. Pure Appl. Algebra  {\bf 199}  (2005),  no. 1-3, 27--41.
\bibitem[FG]{FG} S.~Fomin, C.~Greene; Noncommutative Schur functions and 
their applications. Discrete Math.  {\bf 193}  (1998),  no. 1-3, 179--200.
\bibitem[GM1]{GM0} O.~Ganyushkin, V.~Mazorchuk; On the structure 
of $\mathcal{IO}_n$.  Semigroup Forum  {\bf 66}  (2003),  no. 3, 455--483.
\bibitem[GM2]{GM01} O.~Ganyushkin, V.~Mazorchuk; Combinatorics of 
nilpotents in symmetric inverse semigroups. Ann. Comb. {\bf 8}  
(2004),  no. 2, 161--175.
\bibitem[GM3]{GM} O.~Ganyushkin, V.~Mazorchuk; Classical finite 
transformation semigroups. An introduction. Algebra and Applications, 
{\bf 9}. Springer-Verlag London, Ltd., London,  2009.
\bibitem[GMS]{GMS} O.~Ganyushkin, V.~Mazorchuk, B.~Steinberg; 
On the irreducible representations of a finite semigroup. 
Proc. Amer. Math. Soc. {\bf 137} (2009), no. 11, 3585--3592.
\bibitem[Go]{Go} R.~Golovko; On some properties of Kiselman's
semigroup, 4-th international algebraic conference in Ukraine, Lviv,
August 4-9, 2003, Collection of abstracts, 81-82. 
\bibitem[Hi]{Hi} P.~Higgins; Combinatorial aspects of semigroups 
of order-preserving and decreasing functions.  Semigroups 
(Luino, 1992),  103--110, World Sci. Publ., River Edge, NJ, 1993. 
\bibitem[HNT]{HNT} F.~Hivert, J.-C.~Novelli, J.-Y.~Thibon; Yang-Baxter 
bases of $0$-Hecke algebras and representation theory of 
$0$-Ariki-Koike-Shoji algebras. Adv. Math. {\bf 205} (2006), no. 2, 504--548.
\bibitem[HST1]{HST} F.~Hivert, A.~Schilling, N.~Thi{\'e}ry; Hecke group 
algebras as quotients of affine Hecke algebras at level 0. J. Combin. 
Theory Ser. A {\bf 116} (2009), no. 4, 844--863.
\bibitem[HST2]{HST2} F.~Hivert, A.~Schilling, N.~Thi{\'e}ry;
The biHecke monoid of a finite Coxeter group, Preprint arXiv:0912.2212.
\bibitem[HT]{HT} F.~Hivert, N.~Thi{\'e}ry; Representation theories of some towers of algebras related to the symmetric groups and their Hecke algebras.
Preprint arXiv:math/0607391.
\bibitem[KM]{KM} G.~Kudryavtseva, V.~Mazorchuk; On Kiselman's 
semigroup.  Yokohama Math. J.  {\bf 55}  (2009),  no. 1, 21--46.
\bibitem[Ll]{Ll} G.~Lallement; Semigroups and combinatorial 
applications. Pure and Applied Mathematics. A Wiley-Interscience 
Publication. John Wiley \& Sons, New York-Chichester-Brisbane, 1979.
\bibitem[Ma]{Ma} A.~Mathas; Iwahori-Hecke algebras and Schur algebras of 
the symmetric group. University Lecture Series, {\bf 15}. 
American Mathematical Society, Providence, RI, 1999.
\bibitem[McN]{McN} P.~McNamara; EL-labelings, supersolvability and 
$0$-Hecke algebra actions on posets.  J. Combin. Theory Ser. A  
{\bf 101 } (2003),  no. 1, 69--89. 
\bibitem[No]{No} P.~Norton; $0$-Hecke algebras.  J. Austral. Math. 
Soc. Ser. A  {\bf 27}  (1979), no. 3, 337--357. 
\bibitem[NT1]{NT} J.-C.~Novelli, J.-Y.~Thibon; Noncommutative symmetric 
Bessel functions.  Canad. Math. Bull.  {\bf 51}  (2008),  no. 3, 424--438. 
\bibitem[NT2]{NT2} J.-C.~Novelli, J.-Y.~Thibon; Noncommutative 
symmetric functions and Lagrange inversion.  Adv. in Appl. Math.  
{\bf 40} (2008),  no. 1, 8--35.
\bibitem[Pi]{Pi}  J.-{\'E}.~Pin; Mathematical Foundations of Automata 
Theory. Preprint available at www.liafa.jussieu.fr/~jep/PDF/MPRI/MPRI.pdf
\bibitem[So]{So} A.~Solomon; Catalan monoids, monoids of local 
endomorphisms, and their presentations. Semigroup Forum {\bf 53} 
(1996), no. 3, 351--368.
\bibitem[St]{St1} R.~Stanley; Enumerative combinatorics. Vol. 2. 
Cambridge Studies in Advanced Mathematics, {\bf 62}. 
Cambridge University Press, Cambridge, 1999.
\end{thebibliography}
\end{document}